\numberwithin{equation}{section}
\theoremstyle{plain}
\newtheorem{lemma}{Lemma}[section]
\newtheorem{theorem}[lemma]{Theorem}
\newtheorem{proposition}[lemma]{Proposition}
\theoremstyle{definition}
\newtheorem{remark}{Remark}
\renewcommand{\Re}{\operatorname{Re}}
\begin{document}
	
\title{Hybrid subconvexity bounds for twists of $\rm GL(3)$ $L$-functions}
		
\author{Xin Wang}
\address{School of Mathematics, Shandong University
                       \\Jinan, Shandong 250100, China}
\email{wangx2021@mail.sdu.edu.cn}

\date{}

\author{Tengyou Zhu}
\address{School of Mathematics, Shandong University
                       \\Jinan, Shandong 250100, China}
\email{zhuty@mail.sdu.edu.cn}	

\date{}

\begin{abstract}
Let $F$ be a  Hecke-Maass
cusp form on $\rm SL(3,\mathbb Z)$ and $\chi$ a primitive Dirichlet character
of prime power conductor $\mathfrak{q}=p^k$ with $p$ prime.
In this paper we will prove the following subconvexity bound
$$
L\left(\frac{1}{2}+it, F \times \chi\right)\ll_{\pi,\varepsilon}
p^{3/4}\big(\mathfrak{q}(1+|t|)\big)^{3/4-3/40+\varepsilon},
$$
for any $\varepsilon >0$ and $t \in \mathbb{R}$.
\end{abstract}

	\keywords{Hybrid subconvexity, twists,
         $\rm GL(3)$ $L$-functions, delta method}
	
	\maketitle
\section{Introduction}\label{introduction}

Let $L(s,f)$ be a general $L$-function with an analytic conductor $\mathfrak{q}(f)$. By the functional equation and the Phragm\'{e}n-Lindel\"{o}f convexity principle, there is a convexity bound
$L(s, f)\ll \mathfrak{q}(f)^{1/4+\varepsilon}$ on the critical line
$\Re(s)=1/2$.
In many applications of number theory, it is necessary to beat convexity bound to get bounds of the form
$L(1/2+it,f)\ll\mathfrak{q}(f)^{1/4-\delta+\varepsilon}$ for some $\delta>0$. The famous Riemann hypothesis implies that $L(1/2+it,f)\ll\mathfrak{q}(f)^\varepsilon$ which is known as Lindel\"{o}f hypothesis.
In this paper, we get a new hybrid subconvexity bound for $\rm GL(3)$ $L$-functions twisted by a primitive Dirichlet character modulo $\mathfrak{q}=p^k$ with $p$ prime.

In the last two decades, people extend the results on $\rm GL(2)$ and $\rm GL(3)$
 $L$-functions to different aspects, either in $\mathfrak{q}$-aspect or in $t$-aspect.
(see\cite{Agg2}, \cite{BD}, \cite{Li}, \cite{Mun1},
\cite{Mun6}, \cite{LS} and so on).

Let $F$ be a Hecke-Maass cusp form for $\rm SL(3,\mathbb{Z})$
with the normalized Fourier coefficients $A(m,n)$. The $L$-function associated with $F$ is
$$
L(s, F)=\sum\limits_{n\geq1}\frac{A(1,n)}{n^{s}},\ \ \ \Re(s)>1.
$$
Let $\chi$ be a primitive Dirichlet character of  conductor $\mathfrak{q}$.
The  twisted $L$-function is defined by
$$
L(s, F \times \chi)=\sum\limits_{n\geq1}
\frac{A(1,n)\chi(n)}{n^{s}}, \ \ \ \Re(s)>1,
$$
which has analytic continuation to the whole complex plane. We consider the $L$-values at the point $1/2+it$.
The Phragm\'en-Lindel\"of principle implies the convexity bound
$$
L\left(\frac{1}{2}+it,F\times \chi\right)\ll \left(\mathfrak{q}(1+|t|)\right)^{3/4+\varepsilon}.
$$

When $\mathfrak{q}=p^k$ is a prime power, Blomer and Mili\'{c}evi\'{c} \cite{BD}
proved the subconvexity bounds for twists of $\rm GL(2)$ $L$-functions:
$$
L\left(\frac{1}{2}+it,g\times \chi\right)\ll_{p,t,g,\varepsilon} \mathfrak{q}^{1/3+\varepsilon}.
$$

For the $\rm GL(3)$ case, Sun and Zhao \cite{SZ} obtained
$$
L\left(\frac{1}{2},F\times \chi\right)\ll_{F,\varepsilon}p^{3/4}
\mathfrak{q}^{3/4-3/40+\varepsilon}
$$
for any $\varepsilon>0$.

Our main result is the following.
\begin{theorem}\label{main-theorem}
Let $F$ be a Hecke-Maass cusp form for $\rm SL(3,\mathbb{Z})$ and $\chi$ be a
Dirchlet character of prime power conductor $\mathfrak{q}=p^k$, with $k\geq3$. Then we have
$$
L\left(\frac{1}{2}+it,F\times \chi\right)\ll_{F,\varepsilon}
p^{3/4}\big(\mathfrak{q}(1+|t|)\big)^{3/4-3/40+\varepsilon}.
$$
\end{theorem}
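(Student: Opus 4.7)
The plan is to adapt the delta-method framework of Sun--Zhao \cite{SZ}, now treating $C:=\mathfrak{q}(1+|t|)$ as a uniform analytic conductor and carefully tracking the $t$-oscillation through the resulting exponential integrals. By the approximate functional equation and a smooth dyadic decomposition, the proof reduces to showing that the sum
\[
S(N)=\sum_{n\geq 1} A(1,n)\chi(n)n^{-it}V(n/N)
\]
satisfies $S(N)\ll p^{3/4}N^{1/2}C^{1/4-3/40+\varepsilon}$ uniformly for $N$ up to $C^{3/2+\varepsilon}$, the generic range being $N\asymp C^{3/2}$.

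Next I would exploit the prime-power structure of $\chi$ via a conductor-lowering step: writing $n=n_0+p^{\lceil k/2\rceil}r$ with $n_0$ traversing residues coprime to $p$ modulo $p^{\lceil k/2\rceil}$, the identity $\chi(1+p^{\lceil k/2\rceil}r\overline{n_0})=e\big(\alpha_\chi r\overline{n_0}/p^{\lfloor k/2\rfloor}\big)$ (which uses $k\geq 3$ to guarantee convergence of the relevant $p$-adic exponential) reduces $\chi$ to an additive character of modulus $p^{\lfloor k/2\rfloor}\approx\sqrt{\mathfrak{q}}$. Inserting this decomposition into $S(N)$ and applying the Duke--Friedlander--Iwaniec $\delta$-symbol to separate an auxiliary variable via moduli $q\le Q$, one obtains a bilinear sum in which the oscillation is controlled jointly by $q$ and $p^{\lfloor k/2\rfloor}$, with $Q$ to be chosen so that these contributions balance.

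I would then apply $\rm GL(3)$ Voronoi summation to the sum over $A(1,m)$, shortening it from length $N$ to a dual length. On the complementary side, Poisson summation against the additive character converts the $r$-sum into a sum over dual frequencies together with a character sum and an oscillatory integral; the factor $n^{-it}$ now enters the phase, and its stationary-phase analysis produces $(1+|t|)^{1/2}$-type factors that combine with the $\mathfrak{q}$-aspect through the unified conductor $C$. Cauchy--Schwarz in the dual variable eliminates the $\rm GL(3)$ coefficients, after which opening the square and applying a second Poisson summation yields an expression amenable to Weil-type bounds for the resulting exponential sums together with stationary-phase estimates for the integrals.

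The main obstacle will be the stationary-phase analysis in the hybrid setting after the two Poisson steps: one must verify that the final phase $\Phi$ admits a unique nondegenerate stationary point in the support of the weight uniformly in $t$, and that the resulting asymptotic contributes precisely the powers of $(1+|t|)$ required to match the $\mathfrak{q}$-exponent $3/4-3/40$. The gain of $3/40$ itself originates in the mismatch between the post-Voronoi dual length and the effective Cauchy--Schwarz conductor, exactly as in \cite{SZ}; the new content here is ensuring that this mismatch is preserved when the $t$-aspect is nontrivial, which is achieved by uniform oscillatory-integral estimates and accounts for most of the technical work.
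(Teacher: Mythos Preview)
Your high-level architecture (approximate functional equation, DFI $\delta$-symbol, $\mathrm{GL}(3)$ Voronoi, Poisson, Cauchy--Schwarz, second Poisson) is the same as the paper's, but the conductor-lowering mechanism you describe is genuinely different. You propose a Postnikov/$p$-adic logarithm expansion of $\chi$ at depth $\lceil k/2\rceil$, converting $\chi$ to an additive character before applying the circle method. The paper does \emph{not} touch $\chi$ at all at this stage: instead it modifies the $\delta$-symbol itself (Lemma~\ref{circle method}) so that every circle-method modulus is automatically divisible by $p^{\lambda}$ for a \emph{free} parameter $\lambda$. The multiplicative character survives until Poisson on the $m$-sum, where it collapses to a Gauss sum and produces the congruence $m\equiv a p^{k-\lambda}\bmod q$; no $p$-adic logarithm enters. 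The character sums after the final Poisson are therefore not generic Weil sums but specific products of Kloosterman sums with $\chi$-twists, analyzed by hand in Section~\ref{10} (Lemma~\ref{C*{n2}}); your appeal to ``Weil-type bounds'' would not suffice here.

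The more serious concern is your fixed depth $\lceil k/2\rceil$. In the paper the exponent $3/4-3/40$ arises only after simultaneously optimizing $Q=N^{1/2}/(p^{\lambda/2}t^{1/5})$ and $\lambda=\lfloor 2k/5\rfloor+1$; this is precisely the choice that interpolates Aggarwal's $t$-aspect parameter $Q=N^{1/2}/t^{1/5}$ and Sun--Zhao's depth-aspect parameter $Q=N^{1/2}/p^{\lambda/2}$. By fixing the splitting level at $k/2$ you lose one degree of freedom in the final balancing, and it is not clear your outline then recovers $3/40$ rather than a weaker saving. If you pursue the Postnikov route you would need to reintroduce a free depth parameter (accepting higher-order terms in the $p$-adic expansion when $\lambda<k/2$) and redo the endgame optimization; otherwise the approach as written has a gap at exactly the step where the numerical exponent is produced.
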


\begin{remark}
In rest of the paper, we will carry out the proof under the assumption $t\geq \mathfrak{q}^{\varepsilon}$ for
some $\varepsilon>0$. For $t\leq \mathfrak{q}^{\varepsilon}$, one can extend the method of \cite{SZ}
to prove $
L\left(\frac{1}{2}+it,F\times \chi\right)\ll_{t,\pi,\varepsilon}p^{3/4}\mathfrak{q}^{3/4-3/40+\varepsilon}
$ with polynomial dependence on $t$. For $t\leq -\mathfrak{q}^{\varepsilon}$, the same result follows from the
case $t\geq \mathfrak{q}^{\varepsilon}$ by the functional equation.
\end{remark}

\begin{remark}
We are not trying to get the best exponent in $p$. With the present exponent $3/4$, the bound in
Theorem \ref{main-theorem} breaks the convexity for $k>10$ in $\mathfrak{q}$-aspect.
\end{remark}

\medskip

\noindent{\bf Notation.}
Throughout the paper, $\varepsilon$ and $A$ are arbitrarily small and arbitrarily large positive
numbers, respectively,  which may be different at each occurrence. As usual,
$e(x)=e^{2\pi ix}$ and  the symbol $n\sim X$ means $X<n\leq 2X$.
\section{Preliminaries}
\label{prelim}
\subsection{Hecke--Maass cusp forms for $\mathrm{GL}(3)$}

Let $F$ be a Hecke--Maass cusp form for $\rm SL(3,\mathbb{Z})$, which is an
eigenfunction for all the Hecke
operators. Let the Fourier coefficients be $A(n_1,n_2)$, normalized so that $A(1, 1)=1$.
The Langlands parameters $(\mu_1, \mu_2, \mu_3)$ associated with $F$ are
$\mu_1=-\nu_1-2\nu_2+1$, $\mu_2=-\nu_1+\nu_2$, $\mu_3=2\nu_1+\nu_2-1$.

By Rankin--Selberg theory, we have
\begin{align}\label{Rankin--Selberg}
\mathop{\sum\sum}_{n_1^2n_2\leq N} \left|A(n_1,n_2)\right|^2 \ll N.
\end{align}
As \cite{HX}, we record the individual bound
\begin{align}
A(n_1,n_2)\ll(n_1n_2)^{\theta_3+\varepsilon},
\end{align}
where $\theta_3 \leq 5/14$ is the bound toward to the Ramanujan conjecture on GL(3). So we have
\begin{align}
\sum_{n_2\sim N}|A(n_1,n_2)|\ll\sum_{r|n_1^\infty}\sum_{n_2\sim N/n_1\atop(n_1,n_2)=1}|A(n_1,rn_2)|\ll
\sum_{r|n_1^\infty}|A(n_1,r)|
\sum_{n_2\sim N\atop (n_1,n_2)=1}|A(1,n_2)|\ll n_1^{\theta_3+\varepsilon}N,
\end{align}
and
\begin{align}\label{theta3}
\sum_{n_2\sim N}|A(n_1,n_2)|^2\ll\sum_{r|n_1^\infty}\sum_{n_2\sim N/n_1\atop(n_1,n_2)=1}|A(n_1,rn_2)|^2\ll
\sum_{r|n_1^\infty}|A(n_1,r)|^2
\sum_{n_2\sim N\atop (n_1,n_2)=1}|A(1,n_2)|^2\ll n_1^{2\theta_3+\varepsilon}N.
\end{align}
Here we have used \eqref{Rankin--Selberg} and the fact
$\sum_{d|n_1^\infty}d^{-\sigma}\ll n_1^{\varepsilon}$,
for $\sigma > 0$.

The $L$-function $L(s,F\times\chi)$
satisfies the functional equation
$$
\Lambda(s,F\times\chi)=\epsilon(F \times \chi)
\Lambda(1-s,\tilde{F}\times\overline{\chi}),
$$
where
$$
\Lambda(s,F\times\chi)=\mathfrak{q}^{-3s/2}\pi^{-3s/2}
\prod_{j=1}^{3}\Gamma\left(\frac{s-\mu_j}{2}\right)L(s,F \times \chi)
$$
is the completed $L$-function and $\epsilon(F \times \chi)$ is the root number.
Here $\tilde{F}$ is the dual cusp form which has Langlands parameters $(-\mu_3, -\mu_2, -\mu_1)$.
By \cite[Chapter 5.2]{IK}, we can obtain the approximate functional equation which leads
us to the following result.

\begin{lemma}\label{functional-equation}
We have
$$
L\left(\frac{1}{2}+it, F \times \chi\right)\ll\big(\mathfrak{q}(|t|+1)\big)^{\varepsilon}
\sup_{N\ll\big(\mathfrak{q}(|t|+1)\big)^{3/2+\varepsilon}}
\frac{S(N)}{\sqrt{N}}+\big(\mathfrak{q}(|t|+1)\big)^{-A},
$$
where
$$
S(N)=\sum_{n\geq 1}A(1,n)\chi(n)n^{-it}V\left(\frac{n}{N}\right),
$$
with compactly supported smooth function $V$ such that $\rm supp$ $V \subset [1,2]$ and $V^{(j)}\ll 1$ for $j\geq 1$.
\end{lemma}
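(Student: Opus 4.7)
The plan is to derive this from the standard approximate functional equation \cite[Chapter 5.2]{IK}, followed by a smooth dyadic decomposition.

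First I would compute the analytic conductor of $L(s,F\times\chi)$. With three archimedean gamma factors $\Gamma((s-\mu_j)/2)$ and arithmetic conductor $\mathfrak{q}^3$, Stirling's formula at $s=\tfrac12+it$ gives analytic conductor $C \asymp \mathfrak{q}^3(1+|t|)^3$, up to a multiplicative constant depending only on the fixed Langlands parameters $\mu_j$. A standard Mellin--Barnes contour shift (starting on $\Re(u)=2$, moving to $\Re(u)=-2$, picking up the residue at $u=0$, then applying the functional equation on the shifted line) expresses
\[
L\!\left(\tfrac12+it,F\times\chi\right) = \sum_{n\geq 1}\frac{A(1,n)\chi(n)\,n^{-it}}{\sqrt{n}}\,W_+\!\left(\frac{n}{X}\right) + \epsilon(F\times\chi)\sum_{n\geq 1}\frac{\overline{A(1,n)\chi(n)}\,n^{it}}{\sqrt{n}}\,W_-\!\left(\frac{nX}{C}\right),
\]
where $X>0$ is a free parameter and the smooth weights $W_\pm$ decay faster than any polynomial at infinity. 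Choosing $X=\sqrt{C}$ makes both sums effectively supported on $n \ll C^{1/2+\varepsilon}=(\mathfrak{q}(1+|t|))^{3/2+\varepsilon}$, while the contribution beyond this range is $(\mathfrak{q}(1+|t|))^{-A}$ for any fixed $A$.

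Second, I would insert a smooth dyadic partition of unity $1=\sum_N U(n/N)$, with $N$ running over powers of $2$ up to $C^{1/2+\varepsilon}$ and $U\in C_c^\infty([1,2])$ satisfying $U^{(j)}\ll_j 1$. Each dyadic piece of the first sum, after absorbing $W_+(n/X)$ into the bump, becomes a term of the form $S(N)/\sqrt{N}$ as in the lemma. The dual sum is treated identically after noting that $\overline{A(1,n)}$ are the Fourier coefficients of $\tilde{F}$, which satisfy the same Rankin--Selberg and individual bounds. The $O(\log(\mathfrak{q}(1+|t|)))$ dyadic levels are absorbed into the factor $(\mathfrak{q}(1+|t|))^{\varepsilon}$, and passing to the supremum over $N$ gives the stated inequality.

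The only technical point is to verify uniform control $V^{(j)}\ll_j 1$ for the composite bump $V(x)=U(x)\,W_+(xN/X)$ appearing in each dyadic piece; this follows because on the support $x\in[1,2]$ one has $xN/X\leq 4$ whenever $N\ll X$, and $W_+$ together with all its derivatives is bounded on any compact subset of $(0,\infty)$. This step is routine but must be done carefully because the weight $V$ used to define $S(N)$ is required to be $\mathfrak{q}$- and $t$-independent in its derivative bounds. Once this is checked, the proof is complete.
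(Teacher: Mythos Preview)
Your proposal is correct and follows precisely the route the paper indicates: apply the approximate functional equation from \cite[Chapter~5.2]{IK}, then insert a smooth dyadic partition of unity and absorb the weights into a single bump $V$. The paper itself offers no details beyond that citation, so your expansion is faithful to the intended argument; the only small omission is the range $X\ll N\ll C^{1/2+\varepsilon}$, where $xN/X$ is not in a fixed compact set, but there the rapid decay of $W_+$ (and its derivatives) handles the contribution, which is routine.
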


\subsection{Summation formulas}

We first recall the Poisson summation formulae over
an arithmetic progression.
\begin{lemma}
Let $\beta \in \mathbb{Z}$ and $c \in \mathbb{Z}_{\geq 1}$. For a Schwartz function $f: \mathbb{R} \to \mathbb{C}$, we have
\begin{align*}
\mathop{\sum}_{n\in \mathbb{Z} \atop n \equiv \beta \bmod c}f(n)=\frac{1}{c}
\sum_{n\in\mathbb{Z}}\hat{f}\left( \frac{n}{c}\right)e\left(\frac{n\beta}{c}\right),
\end{align*}
where $\hat{f}=\int_{\mathbb{R}}f(x)e(-xy)\mathrm{d}x$ is the Fourier transform of $f$.
\begin{proof} See e.g.\cite[Eq. (4.24)]{IK}.\end{proof}
\end{lemma}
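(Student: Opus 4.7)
The idea is to reduce the identity to the classical Poisson summation formula $\sum_{m\in\mathbb{Z}} g(m) = \sum_{n\in\mathbb{Z}} \widehat{g}(n)$ for a Schwartz function $g$ on $\mathbb{R}$. First I would parametrise the residue class: every integer $n\equiv\beta\pmod{c}$ can be written uniquely as $n=\beta+cm$ with $m\in\mathbb{Z}$, so that the left-hand side of the lemma becomes $\sum_{m\in\mathbb{Z}} f(\beta+cm)$.

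Next I would set $g(x)=f(\beta+cx)$. Since $f$ is Schwartz, so is $g$, and classical Poisson summation applies. A direct change of variable $u=\beta+cx$ in the defining Fourier integral gives
\[
\widehat{g}(y) \;=\; \int_{\mathbb{R}} f(\beta+cx)\, e(-xy)\, \mathrm{d}x \;=\; \frac{1}{c}\, e\!\left(\frac{\beta y}{c}\right)\widehat{f}\!\left(\frac{y}{c}\right).
\]
Evaluating at $y=n\in\mathbb{Z}$ and summing over $n$ produces exactly the right-hand side of the claimed formula, finishing the proof.

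The argument is entirely standard and has no genuine obstacle: the only point requiring justification is the interchange of summations hidden inside Poisson, and for Schwartz $f$ both $g$ and $\widehat{g}$ decay faster than any polynomial, so both sides converge absolutely and the manipulation is legitimate. An equivalent and slightly more elementary route is to form the $c$-periodic function $P(x) = \sum_{m\in\mathbb{Z}} f(x+cm)$, expand it in its Fourier series on $\mathbb{R}/c\mathbb{Z}$ whose coefficients are $\tfrac{1}{c}\widehat{f}(n/c)$ by a direct computation, and evaluate at $x=\beta$; rapid decay of $\widehat{f}$ ensures pointwise convergence at $\beta$. Since this lemma is a standard tool rather than a substantive step in the paper, I would simply cite a textbook reference such as \cite{IK} and move on.
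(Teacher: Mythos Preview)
Your proposal is correct and matches the paper's treatment: the paper gives no argument at all beyond citing \cite[Eq.~(4.24)]{IK}, and your sketch (parametrise $n=\beta+cm$, apply classical Poisson to $g(x)=f(\beta+cx)$, compute $\widehat{g}$ by change of variable) is exactly the standard derivation that lies behind that reference. There is nothing to add.
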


We now recall the Voronoi summation formula for $\rm SL(3, \mathbb{Z})$. For $\ell=0,1$ we define
$$
\gamma_\ell (s)=\frac{1}{2\pi^{3(s+1/2)}}\prod_{j=1}^3
\frac{\Gamma\left((1+s+\mu_j+\ell)/2\right)}
{\Gamma\left((-s-\mu_j+\ell)/2\right)}
$$
and set $\gamma_{\pm}(s)=\gamma_0(s)\mp i \gamma_1(s)$.
Here $\mu_j$ are the Langlands parameters of $F$ as above.
For $\psi(x)\in \mathcal{C}_c^\infty(0,\infty)$ we
denote by $\widetilde{\psi}(s)$
the Mellin transform of $\psi(x)$.
Let
\begin{align}\label{intgeral transform-3}
\Psi^{\pm}\left(x\right)
=\frac{1}{2\pi i}\int_{(\sigma)}x^{-s}
\gamma_{\pm}(s)\widetilde{\psi}(-s)\mathrm{d}s,
\end{align}
where $\sigma>\max\limits_{1\leq j\leq 3}\{-1-\mathrm{Re}(\mu_j)\}$.
Then we have the following Voronoi summation formula.

\begin{lemma}\label{voronoiGL3}
Let $q\in \mathbb{N}$ and $a\in \mathbb{Z}$ be such
that $(a,q)=1$. Then
$$
\sum_{n=1}^{\infty}A\left(r,n\right)
e\left(\frac{an}{q}\right)
\psi\left(n\right)= q\sum_{\pm}\sum_{n_{1}|qr}
\sum_{n_{2}=1}^{\infty}\frac{A\left(n_{2},n_{1}\right)}{n_{1}n_{2}}
S\left(r\overline{a},\pm n_{2};\frac{rq}{n_{1}}\right)
\Psi^{\pm}\left(\frac{n_{1}^{2}n_{2}}{q^{3}r}\right),
$$
where $a \overline{a} \equiv 1(\bmod\,q)$ and $S(m,n;c)$ is the classical Kloosterman sum.
\end{lemma}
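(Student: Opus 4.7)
The plan is to derive the identity from the functional equation of the additively twisted $\mathrm{GL}(3)$ $L$-function, following the approach of Miller--Schmid and Goldfeld--Li. First I would Mellin-invert $\psi$, writing $\psi(n)=\frac{1}{2\pi i}\int_{(\sigma_0)}\widetilde{\psi}(s)\,n^{-s}\,ds$ with $\sigma_0$ large enough for absolute convergence, and interchange the sum and integral to reduce the left-hand side to
$$
\frac{1}{2\pi i}\int_{(\sigma_0)}\widetilde{\psi}(s)\,D(s)\,ds,\qquad D(s):=\sum_{n\ge 1}\frac{A(r,n)\,e(an/q)}{n^s}.
$$

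The substantive step is to prove a functional equation for $D(s)$. For $r=1$ this is the Miller--Schmid / Goldfeld--Li identity: it expresses $D(s)$ as a combination of two dual Dirichlet series---an ``even'' piece and an ``odd'' piece---involving Kloosterman sums and the dual Fourier coefficients $A(n_2,n_1)$, weighted by ratios of $\Gamma$-factors that assemble into $\gamma_{\pm}(s)$. For general $r$ I would reduce to the $r=1$ case via the $\mathrm{GL}(3)$ Hecke relations expressing $A(r,n)$ as a convolution of $A(r/d,1)$ and $A(1,n/d)$; carefully tracking the divisor conditions together with the reparametrisation of the additive character $e(an/q)$ under $n\mapsto dn$ produces the divisibility constraint $n_1\mid qr$, the factor $r\bar a$ in the Kloosterman argument, and the modulus $rq/n_1$.

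Substituting the transformed Dirichlet series back into the Mellin integral and changing variables $s\mapsto -s$ shifts the contour to the absolutely convergent side of the dual series. Pulling the sum over $n_1\mid qr$ and $n_2\ge 1$ outside, the remaining inner integral becomes exactly the Mellin--Barnes integral \eqref{intgeral transform-3} defining $\Psi^{\pm}\bigl(n_1^2 n_2/(q^3 r)\bigr)$, while the prefactor $q$ comes from the conductor of the additive twist in the functional equation.

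The main obstacle is the $r=1$ functional equation itself, which is genuinely non-trivial and is essentially the content of the Voronoi formula. Granting this input, the remainder is bookkeeping of gamma factors and divisor conditions, together with routine applications of Stirling's formula and the Phragm\'en--Lindel\"of principle to justify the interchange of summation and integration and the contour shift.
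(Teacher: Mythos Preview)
Your outline is correct and follows the standard Miller--Schmid / Goldfeld--Li approach via the functional equation of the additively twisted $\mathrm{GL}(3)$ $L$-function, with the reduction from general $r$ to $r=1$ handled by the Hecke relations. There is nothing to compare against, however: the paper does not prove this lemma at all. It is stated as a known result, with the references \cite{MS2} and \cite{GL1} in the bibliography serving as the implicit sources; the authors simply quote the formula and move on. So your proposal is not a different route from the paper's proof---it is a proof where the paper offers none, and it coincides with the argument in the cited literature.
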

\subsection{The delta method}

There are two oscillatory factors contributing to the convolution sums.
Our method is based on separating these oscillations using the $\delta$-method. In the present situation we will use a version of the
circle method by Duke, Friedlander and Iwaniec (see \cite[Chapter 20]{IK}).

Define $\delta: \mathbb{Z}\rightarrow \{0,1\}$ with
$\delta(0)=1$ and $\delta(n)=0$ for $n\neq 0$.
For any $n\in \mathbb{Z}$ and $Q\in \mathbb{R}^+$, we have
\begin{align}\label{DFI's}
\delta(n)=\frac{1}{Q}\sum_{1\leq q\leq Q} \;\frac{1}{q}\;\sideset{}{^\star}\sum_{a\bmod{q}}
e\left(\frac{na}{q}\right)\int_\mathbb{R}g(q,x) e\left(\frac{nx}{qQ}\right)\mathrm{d}x,
\end{align}
where the $\star$ on the sum indicates
that the sum over $a$ is restricted to $(a,q)=1$.
The function $g$ has the following properties
(see (20.158) and (20.159) of \cite{IK}
\footnote{After correcting a typo in eq. (20.158) there.} and \cite[Lemma 15]{Huang})
\begin{align}\label{g-h}
g(q,x)\ll |x|^{-A},\quad g(q,x) =1+
O\left(\frac{Q}{q}\left(\frac{q}{Q}+|x|\right)^A\right)
\end{align}
for any $A>1$ and
\begin{align}\label{g rapid decay}
\frac{\partial^j}{\partial x^j}g(q,x)\ll
|x|^{-j}\min\left(|x|^{-1},\frac{Q}{q}\right)\log Q, \quad j\geq 1.
\end{align}
In particular the first property in \eqref{g-h} implies that
the effective range of the integration in
\eqref{DFI's} is $[-Q^\varepsilon, Q^\varepsilon]$.
\subsection{Oscillatory integrals}

Let
\begin{align*}
 I = \int_{\mathbb{R}} w(y) e^{i \varrho(y)} dy.
\end{align*}
Firstly, we have the following estimates for exponential integrals
(see \cite[Lemma 8.1]{BKY}  and \cite[Lemma A.1]{AHLQ}).
	
	\begin{lemma}\label{lem: upper bound}
		Let $w(x)$ be a smooth function    supported on $[ a, b]$ and
        $\varrho(x)$ be a real smooth function on  $[a, b]$. Suppose that there
		are   parameters $Q, U,   Y, Z,  R > 0$ such that
		\begin{align*}
		\varrho^{(i)} (x) \ll_i Y / Q^{i}, \qquad w^{(j)} (x) \ll_{j } Z / U^{j},
		\end{align*}
		for  $i \geqslant 2$ and $j \geqslant 0$, and
		\begin{align*}
		| \varrho' (x) | \geqslant R.
		\end{align*}
		Then for any $A \geqslant 0$ we have
		\begin{align*}
		I \ll_{ A} (b - a)
Z \bigg( \frac {Y} {R^2Q^2} + \frac 1 {RQ} + \frac 1 {RU} \bigg)^A .
		\end{align*}
			\end{lemma}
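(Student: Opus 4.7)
The plan is to establish the bound by $A$-fold integration by parts against the non-stationary phase. Set
\[
D^\star f \;=\; -\tfrac{1}{i}\,\tfrac{d}{dy}\bigl(f/\varrho'(y)\bigr).
\]
Since $e^{i\varrho}=(i\varrho')^{-1}\tfrac{d}{dy}e^{i\varrho}$ and $w$ is supported in $[a,b]$, one integration by parts gives $\int_a^b w\,e^{i\varrho}\,dy = \int_a^b D^\star(w)\,e^{i\varrho}\,dy$ with no boundary contribution. Iterating $A$ times yields $I = \int_a^b (D^\star)^A(w)(y)\,e^{i\varrho(y)}\,dy$, whence
\[
|I| \;\leq\; (b-a)\,\|(D^\star)^{A}(w)\|_\infty .
\]
The problem thus reduces to the pointwise bound $\|(D^\star)^{A}(w)\|_\infty \ll_{A} Z\,\Delta^{A}$, where $\Delta := Y/(R^2Q^2) + 1/(RQ) + 1/(RU)$.

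To prove this, I would first establish by induction on $A$ that $(D^\star)^{A}(w)$ is a finite sum of monomials of the form
\[
\frac{w^{(j_0)}\,\varrho^{(j_1+2)}\cdots\varrho^{(j_s+2)}}{(\varrho')^{A+s}},\qquad s\geq 0,\ j_i\geq 0,\ j_0+j_1+\cdots+j_s=A-s,
\]
with combinatorial coefficients depending only on $A$. The induction step follows directly from $D^\star f = -f'/(i\varrho') + f\,\varrho''/(i(\varrho')^2)$: applying $D^\star$ to such a monomial either increments one of $j_0,\ldots,j_s$ by $1$ (via differentiation, raising the denominator exponent by $1$) or increments $s$ by $1$ with a new $j_{s+1}=0$ (via the second term of $D^\star$, raising the denominator exponent by $2$), preserving the invariants that the denominator exponent equals $A+s$ and $j_0+\cdots+j_s+s=A$. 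Substituting the hypothesis bounds and using $\sum_{i=1}^{s}(j_i+2)=A+s-j_0$, each such monomial is bounded by
\[
\frac{Z\,Y^{s}}{U^{j_0}\,Q^{A+s-j_0}\,R^{A+s}} \;=\; Z\left(\frac{Y}{R^2Q^2}\right)^{\!s}\left(\frac{1}{RQ}\right)^{\!A-s-j_0}\left(\frac{1}{RU}\right)^{\!j_0},
\]
which is precisely one term in the multinomial expansion of $Z\,\Delta^{A}$. Summing the finitely many monomials gives $\|(D^\star)^{A}(w)\|_\infty \ll_{A} Z\,\Delta^{A}$, and combining with the first paragraph yields the lemma.

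The main obstacle is the structural induction: one must verify that the class of monomials displayed above is stable under $D^\star$, tracking the index shifts carefully since $D^\star$ mixes differentiation of the numerator with an extra factor of $\varrho''/(\varrho')^2$ from its second term. Once this structural claim is in hand, the three-term estimate $\Delta$ absorbs every monomial without loss thanks to the algebraic identity above, which is the reason the middle term $1/(RQ)$ appears in $\Delta$ despite not being generated after a single integration by parts (where only $Y/(R^2Q^2)$ and $1/(RU)$ arise).
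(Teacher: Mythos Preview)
Your argument is correct and is exactly the standard one. The paper does not supply its own proof of this lemma but simply quotes it from \cite{BKY} (Lemma~8.1) and \cite{AHLQ} (Lemma~A.1); the proof in those references is precisely the $A$-fold integration by parts with the operator $D^\star f = -\tfrac{1}{i}(f/\varrho')'$ and the same monomial bookkeeping you carry out. One small remark: in your induction step the first term $-M'/(i\varrho')$ of $D^\star$ also produces a contribution of the second type (incrementing $s$) when the derivative hits the denominator $(\varrho')^{-(A+s)}$, not only the numerator factors; this does not affect your invariants, which remain intact.
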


Next, we need the following evaluation for exponential integrals
which are
 Lemma 8.1 and Proposition 8.2 of \cite{BKY} in the language of inert functions.

Let $\mathcal{F}$ be an index set, $Y: \mathcal{F}\rightarrow\mathbb{R}_{\geq 1}$
and under this map $T\mapsto Y_T$
be a function of $T \in \mathcal{F}$.
A family $\{w_T\}_{T\in \mathcal{F}}$ of smooth
functions supported on a product of dyadic intervals in $\mathbb{R}_{>0}^d$
is called $Y$-inert if for each $j=(j_1,\ldots,j_d) \in \mathbb{Z}_{\geq 0}^d$
we have
$$
C(j_1,\ldots,j_d)
= \sup_{T \in \mathcal{F} } \sup_{(y_1, \ldots, y_d) \in \mathbb{R}_{>0}^d}
Y_T^{-j_1- \cdots -j_d}\left| y_1^{j_1} \cdots y_d^{j_d}
w_T^{(j_1,\ldots,j_d)}(y_1,\ldots,y_d) \right| < \infty.
$$

\begin{lemma}
\label{lemma:exponentialintegral}
 Suppose that $w = w_T(y)$ is a family of $Y$-inert functions,
 with compact support on $[Z, 2Z]$, so that
$w^{(j)}(y) \ll (Z/Y)^{-j}$.  Also suppose that $\varrho$ is
smooth and satisfies $\varrho^{(j)}(y) \ll H/Z^j$ for some
$H/Y^2 \geq R \geq 1$ and all $y$ in the support of $w$.
\begin{enumerate}
 \item
 If $|\varrho'(y)| \gg H/Z$ for all $y$ in the support of $w$, then
 $I \ll_A Z R^{-A}$ for $A$ arbitrarily large.
 \item If $\varrho''(y) \gg H/Z^2$ for all $y$ in the support of $w$,
 and there exists $y_0 \in \mathbb{R}$ such that $\varrho'(y_0) = 0$
 (note $y_0$ is necessarily unique), then
 \begin{align*}
  I = \frac{e^{i \varrho(y_0)}}{\sqrt{\varrho''(y_0)}}
 F(y_0) + O_{A}(  Z R^{-A}),
 \end{align*}
where $F(y_0)$ is an $Y$-inert function (depending on $A$) supported
on $y_0 \asymp Z$.
\end{enumerate}
\end{lemma}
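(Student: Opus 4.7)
This is the standard stationary-phase lemma for inert amplitudes, and I would prove its two parts by separate classical techniques.

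For part (1), the non-stationary regime, I would iterate the integration-by-parts operator
\[
D[u](y) = -\frac{d}{dy}\!\left(\frac{u(y)}{i\varrho'(y)}\right),
\]
which, since $w$ has compact support, yields $I = \int (D^N w)(y)\, e^{i\varrho(y)}\,dy$ with no boundary contribution. Combining $|\varrho'|\gg H/Z$ with $\varrho^{(j)}\ll H/Z^j$ and the $Y$-inert estimate $w^{(j)}\ll (Y/Z)^j$, each application of $D$ reduces the pointwise size of the amplitude by a factor of order $Y^2/H \leq R^{-1}$. Taking $N=A$ then produces $|I|\ll Z R^{-A}$, as claimed.

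For part (2), the stationary regime, I would apply the Morse lemma near $y_0$. Using $\varrho'(y_0)=0$ and $\varrho''(y_0)\gg H/Z^2>0$ on the support, one writes
\[
\varrho(y) = \varrho(y_0) + \tfrac12 \varrho''(y_0)\,(y-y_0)^2\,\eta(y)
\]
for a smooth $\eta$ with $\eta(y_0)=1$, and the change of variable $u=(y-y_0)\sqrt{\eta(y)}$ converts $I$ into
\[
I = e^{i\varrho(y_0)}\int_{\mathbb{R}} W(u)\, e^{i\varrho''(y_0)u^2/2}\,du,
\]
where $W$ is a new $Y$-inert amplitude on the same scale. Plancherel together with the explicit Fresnel transform of the Gaussian then gives
\[
I = \frac{e^{i\varrho(y_0)}}{\sqrt{\varrho''(y_0)/(2\pi i)}}\int_{\mathbb{R}} \widehat{W}(\xi)\, e^{-i\xi^2/(2\varrho''(y_0))}\,d\xi.
\]
Since $\widehat{W}$ is rapidly decaying beyond scale $Y/Z$ and since there $\xi^2/\varrho''(y_0)\ll Y^2/H\leq R^{-1}$, Taylor-expanding the Gaussian factor around $\xi=0$ yields the stated main term with $F(y_0)$ proportional to $W(0)$ and an error of $O(R^{-A})$, while the tail of the original integral outside the Morse chart is absorbed by part (1).

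The main obstacle will be propagating the $Y$-inert structure through the Morse change of variable and through the Gaussian convolution, and controlling the error uniformly in $Z$, $R$, $H$, and $Y$. The formal identities invoked (integration by parts, Morse lemma, Fresnel Fourier transform) are entirely classical; the substantive content is precisely the derivative bookkeeping for $F$ that underlies Proposition 8.2 of \cite{BKY}.
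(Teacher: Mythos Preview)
The paper does not actually prove this lemma; it is stated as a quotation of Lemma~8.1 and Proposition~8.2 of \cite{BKY} (in the inert-function formalism), with no argument given beyond the citation. Your sketch is a correct outline of the classical proof---iterated integration by parts for the non-stationary case and a Morse change of variables plus Fresnel integral for the stationary case---and this is essentially the method behind the cited results in \cite{BKY}, so there is nothing to add or correct.
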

\section{The set-up}

We will prove the following propositon, from which we prove
Theorem \ref{main-theorem} by using Lemma \ref{functional-equation}.
\begin{proposition}\label{upper bound}
We have
$$
S(N)\ll p^{3/4}N^{1/2+\varepsilon}(\mathfrak{q}t)^{3/4-3/40}.
$$
for $N\ll(\mathfrak{q}t)^{3/2+\varepsilon}$.
\end{proposition}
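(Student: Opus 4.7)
The plan is to follow the conductor-lowering delta-method scheme of Sun--Zhao~\cite{SZ}, adapted so as to carry the analytic oscillation $n^{-it}$ through the argument in parallel with the character twist $\chi(n)$. First I would detect the equality $n=m$ in $S(N)$ by introducing an auxiliary smooth variable $m\sim N$ carrying the weight $\chi(m)m^{-it}$ (so that $\chi(n)n^{-it}$ is moved off the $\mathrm{GL}(3)$ side), and apply the Duke--Friedlander--Iwaniec identity~\eqref{DFI's} with moduli of the form $c=qp^{k_1}$, where $0\leq k_1\leq k$ is a free parameter to be optimized later and $q\leq Q$ is coprime to $p$, with $Q:=\sqrt{N/p^{k_1}}$. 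Forcing $p^{k_1}$ into the modulus is the conductor-lowering trick: the phase $a/c$ on the $m$-side will, after Poisson, combine with $\chi(m)$ to produce a character sum modulo $p^{k}$ of size roughly $p^{(k+k_1)/2}$ rather than the naive $p^{k}$, giving a saving of $p^{(k-k_1)/2}$.

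Next I would apply Lemma~\ref{voronoiGL3} to the $n$-sum, which now has the shape $\sum_n A(1,n)e(an/qp^{k_1})\Phi(n)$ for a mildly oscillatory weight $\Phi$ coming from $V(n/N)$ and the DFI integrand. The dual variables satisfy $n_1\mid qp^{k_1}$ and $n_2\asymp (qp^{k_1})^3/(n_1^2 N)$, together with a Kloosterman sum $S(\bar a,\pm n_2;qp^{k_1}/n_1)$ and the integral transform~\eqref{intgeral transform-3}. For the $m$-sum I would apply Poisson summation to modulus $qp^{k}$. The resulting character sum factors via CRT: the $q$-part is an additive character sum, while the $p$-part is a Gauss-type sum in $\chi$ that pins the dual variable $m^*\bmod p^{k}$ inside a specific coset and supplies the factor $p^{k/2}$ in the amplitude. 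The Mellin--Fourier integral on the $m$-side carries a phase of the form $-t\log y+(\text{linear})$; stationary phase via Lemma~\ref{lemma:exponentialintegral} then localizes $m^*$ to a range determined jointly by $t$ and by the DFI integration variable.

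At this stage $S(N)$ is majorized by an average over $(q,n_1,n_2,m^*)$ of an oscillatory quantity. I would then apply Cauchy--Schwarz, pushing the $n_2$-sum (whose moments are controlled by Rankin--Selberg~\eqref{Rankin--Selberg}) outside and placing the square around the $(q,m^*)$-sum. Opening the square and applying Poisson summation once more in $n_2$ reduces matters to an additive character sum modulo $p^{k_1}$ which exhibits essentially square-root cancellation; the diagonal pairs of $q$'s contribute the trivial term. Collecting everything and taking $k_1\approx k/2$ should balance the two contributions and produce the claimed exponent $3/4-3/40$ together with the stated factor $p^{3/4}$ coming from the Gauss-sum factor in Step 2.

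The principal difficulty I expect is the coupled stationary-phase analysis of the three integrals that appear simultaneously: the DFI integral over $x$, the $\mathrm{GL}(3)$ Voronoi kernel $\Psi^{\pm}$, and the Mellin integral governing $m^{-it}$. Their joint behaviour determines the effective support of $m^*$ and hence the gain obtained from Cauchy--Schwarz. Ensuring that the parameter $t$ enters the final bound as $(\mathfrak{q}t)^{3/4-3/40}$ (and not as separate factors of $\mathfrak{q}$ and $t$) requires that the $t$-driven shortening on the $m$-side exactly mirrors the analogous shortening produced by $\Psi^{\pm}$ on the $\mathrm{GL}(3)$ side; securing this symmetry between the $\mathfrak{q}$- and $t$-aspects is the analytic crux of the argument, and is what distinguishes the present proof from the purely $\mathfrak{q}$-aspect treatment of~\cite{SZ}.
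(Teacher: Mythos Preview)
Your overall architecture matches the paper's: conductor-lowering delta method with $p^{\lambda}$ forced into the modulus, Voronoi on the $n$-side, Poisson on the $m$-side, stationary phase, Cauchy--Schwarz keeping $n_2$ outside, a second Poisson in $n_2$, and character-sum estimation. In that sense the proposal is on track.

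There are, however, two concrete parameter errors that would prevent you from reaching the stated bound. First, your choice $Q=\sqrt{N/p^{k_1}}$ has no $t$-dependence; with this $Q$ the argument degenerates to the Sun--Zhao depth-aspect bound with only polynomial growth in $t$. The paper takes $Q=N^{1/2}/(p^{\lambda/2}T)$ with $T=t^{2/5}$, combining the Aggarwal $t$-aspect choice $Q\asymp N^{1/2}/t^{1/5}$ with the Sun--Zhao depth choice. This $t$-factor in $Q$ is precisely what resolves the ``analytic crux'' you flag at the end: it is not obtained from the stationary-phase analysis alone but must be inserted by hand into the delta method. Second, the balancing does not occur at $k_1\approx k/2$; the paper takes $\lambda=\lfloor 2k/5\rfloor+1$. (The constraint $\lambda\le 2k/3$ is also needed for the character-sum Lemma~\ref{C*{n2}}.)

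You also underestimate the case analysis in the integral treatment. The paper splits according to whether $NX/(p^{\lambda}RQ)$ is $\ll t^{\varepsilon}$, between $t^{\varepsilon}$ and $t^{1-\varepsilon}$, or $\gg t^{1-\varepsilon}$; the middle range admits an iterated stationary-phase expansion in powers of $C/t$ (Lemma~\ref{integral-lemma1}), while the large range and the non-oscillating range are handled separately (Lemma~\ref{integral-lemma2} and Section~\ref{non-oscillating}). Finally, the post-Poisson character sum is not a single additive sum modulo $p^{k_1}$ with square-root cancellation: it involves the multiplicative character $\chi$ together with two Kloosterman sums, and the relevant estimate (Lemma~\ref{C*{n2}}) is of size $p^{5\lambda/2+O(1)}$ in the generic case, obtained by a $p$-adic stationary-phase reduction to congruence counting rather than a Weil-type bound.
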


Recall that
$$
S(N)=\sum_{n\geq 1}A(1,n)\chi(n)n^{-it}V\left(\frac{n}{N}\right),
$$
where $\chi$ is a primitive Dirichlet character modulo $\mathfrak{q}=p^k$.
In order to use the delta symbol method, we rewrite the $\delta(n-m)$
in a more analytic form in the following lemma.

\begin{lemma}\label{circle method}
Let $1 \leq \lambda \leq k$. Then we have
\begin{align*}
\delta(n)
=&\sum_{r=0}^{\lambda}\frac{1}{Q}\sum_{q\leq Q\atop (q,p)=1}\frac{1}{qp^\lambda}
\;\sideset{}{^\star}\sum_{a\bmod qp^{\lambda-r}}
e\left(\frac{an}{qp^{\lambda-r}}\right)
\int_{\mathbb{R}}
  g(q,x)e\left(\frac{nx}{Qqp^\lambda}\right)\mathrm{d}x\\
&+\sum_{s=1}^{[\log Q/\log p]}
\frac{1}{Q}\sum_{q\leq Q/p^s\atop (q,p)=1}\frac{1}{qp^{\lambda+s}}
\sideset{}{^\star}\sum_{a\bmod qp^{\lambda+s}}
e\left(\frac{an}{qp^{\lambda+s}}\right)
\int_{\mathbb{R}}
  g(p^sq,x)e\left(\frac{nx}{Qqp^{\lambda+s}}\right)\mathrm{d}x.
\end{align*}
\end{lemma}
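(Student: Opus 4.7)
The plan is to verify that the right-hand side of the claimed formula collapses, via the classical identity $\sum_{d \mid N} c_d(m) = N \cdot \mathbf{1}_{N \mid m}$ for Ramanujan sums, to the ordinary DFI expansion \eqref{DFI's} applied to the integer $n/p^\lambda$, multiplied by the indicator $\mathbf{1}_{p^\lambda \mid n}$. Since $\mathbf{1}_{p^\lambda \mid n} \cdot \delta(n/p^\lambda) = \mathbf{1}_{n=0} = \delta(n)$, the identity will follow.

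Call the two displayed sums on the right $A_1$ (over $r$) and $A_2$ (over $s$). As a first step I would evaluate the inner starred sums as Ramanujan sums $c_c(n) := \sideset{}{^\star}\sum_{a \bmod c} e(an/c)$, and invoke the multiplicativity $c_{qp^j}(n) = c_q(n) c_{p^j}(n)$ valid when $(q,p) = 1$. In $A_1$ the integrand is independent of $r$, so the $r$-sum collapses:
$$
\sum_{r=0}^{\lambda} c_{qp^{\lambda-r}}(n) \;=\; c_q(n) \sum_{u=0}^{\lambda} c_{p^u}(n) \;=\; p^\lambda\, c_q(n)\, \mathbf{1}_{p^\lambda \mid n}.
$$
On the event $p^\lambda \mid n$, writing $n = p^\lambda m$, we have $c_q(n) = c_q(m)$ (since $(q,p)=1$, the $p$-part of $n$ does not affect $\gcd(q,n)$) and the phase simplifies as $nx/(Qqp^\lambda) = mx/(Qq)$. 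Thus $A_1$ equals $\mathbf{1}_{p^\lambda \mid n}$ times exactly the $(c,p) = 1$ contribution to the DFI expansion of $\delta(m)$.

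For $A_2$ the analogous key identity is the elementary equality
$$
\frac{c_{p^{\lambda+s}}(n)}{p^{\lambda+s}} \;=\; \frac{c_{p^s}(m)}{p^s} \qquad \text{whenever } p^\lambda \mid n \text{ and } m = n/p^\lambda,
$$
verified by a short case analysis on the $p$-adic valuation of $n$. Combined with multiplicativity and the phase simplification $nx/(Qqp^{\lambda+s}) = mx/(Qqp^s)$, this turns $A_2$ into $\mathbf{1}_{p^\lambda \mid n}$ times exactly the $p \mid c$ contribution to the DFI expansion of $\delta(m)$. Adding $A_1$ and $A_2$ reassembles the full DFI formula for $\delta(m)$, giving $A_1 + A_2 = \mathbf{1}_{p^\lambda \mid n}\, \delta(n/p^\lambda) = \delta(n)$.

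I do not anticipate any serious obstacle here, as the whole argument is essentially bookkeeping with Ramanujan sums together with a check that the ranges of summation in the original DFI (namely $(c,p)=1$ with $c \leq Q$, and $c = qp^s$ with $q \leq Q/p^s$) match those in the lemma. The most delicate point is the identity $c_{p^{\lambda+s}}(n)/p^{\lambda+s} = c_{p^s}(n/p^\lambda)/p^s$ under the hypothesis $p^\lambda \mid n$: it is exactly this $p^\lambda$-inflation identity that makes the enlarged modulus in $A_2$ compatible with the $1/p^{\lambda+s}$ normalization, and verifying it requires separating the cases $v_p(n) \geq \lambda + s$, $v_p(n) = \lambda + s - 1$, and $v_p(n) < \lambda + s - 1$.
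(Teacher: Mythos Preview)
Your argument is correct. The one point you leave somewhat implicit---that $A_2$ vanishes outright when $p^\lambda\nmid n$---follows immediately from the standard formula for $c_{p^j}(n)$ (it is zero once $v_p(n)<j-1$), so there is no gap.

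Your route is genuinely different from the paper's. The paper argues \emph{forward}: it writes $\delta(n)=\mathbf 1_{p^\lambda\mid n}\,\delta(n/p^\lambda)$, detects the divisibility by additive characters mod $p^\lambda$, plugs in the DFI expansion for $\delta(n/p^\lambda)$, and then splits according to whether $(q,p)=1$ or $p\mid q$. In the coprime piece it uses the Chinese Remainder Theorem to fuse the sum over $a\bmod q$ with the sum over $b\bmod p^\lambda$ into a primitive sum mod $qp^\lambda$, peeling off one power of $p$ at a time to produce the $r$-sum; in the $p\mid q$ piece it repeatedly extracts the $p$-part of $q$ to produce the $s$-sum. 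You instead argue \emph{backward} from the stated identity, recognizing the starred sums as Ramanujan sums, invoking multiplicativity $c_{qp^j}(n)=c_q(n)c_{p^j}(n)$, and collapsing everything with the two identities $\sum_{u=0}^{\lambda}c_{p^u}(n)=p^\lambda\mathbf 1_{p^\lambda\mid n}$ and $c_{p^{\lambda+s}}(n)/p^{\lambda+s}=c_{p^s}(n/p^\lambda)/p^s$. Your approach is shorter and avoids the iterative CRT bookkeeping; the paper's approach has the virtue of showing how the formula is \emph{derived} rather than merely verified, which is what one would want if the shape of the decomposition were not already known.
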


\begin{proof}

Define $\mathbf{1}_\mathscr{F}=1$ if $\mathscr{F}$ is true, and is 0 otherwise.
By \eqref{DFI's}, we write $\delta(n)$ as
$\delta(n/p^\lambda)\mathbf{1}_{p^\lambda|n}$ and detect the congruence by additive
characters to get
\begin{align*}
\delta(n)=\frac{1}{Q}\sum_{q\leq Q}\frac{1}{qp^\lambda}
\sum_{b\bmod p^\lambda}\;
\sideset{}{^\star}\sum_{a\bmod q}e\left(\frac{a+bq}{qp^\lambda}n\right)
\int_{\mathbb{R}}g(q,x)e\left(\frac{nx}{Qqp^\lambda}\right)\mathrm{d}x.
\end{align*}
which can be further written as $\delta_1(n)+\delta_2(n)$ with
\begin{align*}
\delta_1(n)&=\frac{1}{Q}\sum_{q\leq Q\atop (q,p)=1}\frac{1}{qp^\lambda}
\sum_{b\bmod p^\lambda}\;
\sideset{}{^\star}\sum_{a\bmod q}e\left(\frac{a+bq}{qp^\lambda}n\right)
\int_{\mathbb{R}}
  g(q,x)e\left(\frac{nx}{Qqp^\lambda}\right)\mathrm{d}x,\\
\delta_2(n)&=\frac{1}{Q}\sum_{q\leq Q \atop p|q}\frac{1}{qp^{\lambda}}
\sum_{b\bmod p^\lambda}\;
\sideset{}{^\star}\sum_{a\bmod q}e\left(\frac{a+bq}{qp^{\lambda}}n\right)
\int_{\mathbb{R}}
  g(q,x)e\left(\frac{nx}{Qqp^{\lambda}}\right)\mathrm{d}x.
\end{align*}
For $\delta_1(n)$, making a change of variable $a\to ap^{\lambda}$, we have
\begin{align*}
\delta_1(n)
=&\frac{1}{Q}\sum_{q\leq Q\atop (q,p)=1}\frac{1}{qp^\lambda}\;
\sideset{}{^\star}\sum_{b\bmod p^{\lambda}}\;
\sideset{}{^\star}\sum_{a\bmod q}
e\left(\frac{ap^{\lambda}+bq}{qp^{\lambda}}n\right)
\int_{\mathbb{R}}
  g(q,x)e\left(\frac{nx}{Qqp^\lambda}\right)\mathrm{d}x\\
&+\frac{1}{Q}\sum_{q\leq Q\atop (q,p)=1}\frac{1}{qp^\lambda}
\sum_{b\bmod p^{\lambda-1}}\;
\sideset{}{^\star}\sum_{a\bmod q}
e\left(\frac{ap^{\lambda-1}+bq}{qp^{\lambda-1}}n\right)
\int_{\mathbb{R}}
  g(q,x)e\left(\frac{nx}{Qqp^\lambda}\right)\mathrm{d}x.
\end{align*}

Observe that in the first sum, $a$ varies over a set of representatives of
the residue classes modulo $q$ (prime to $q$) and $b$ varies over a set of representatives of the residue
classes modulo $p^{\lambda}$, $ap^{\lambda} + bq$ varies over a set of representatives of the
residue classes modulo $qp^{\lambda}$ prime to $qp^{\lambda}$. Then repeating the process one can get
\begin{align*}
\delta_1(n)
=&\frac{1}{Q}\sum_{q\leq Q\atop (q,p)=1}\frac{1}{qp^\lambda}\;
\sideset{}{^\star}\sum_{a\bmod qp^{\lambda}}
e\left(\frac{an}{qp^\lambda}\right)
\int_{\mathbb{R}}
  g(q,x)e\left(\frac{nx}{Qqp^\lambda}\right)\mathrm{d}x\\
&+\frac{1}{Q}\sum_{q\leq Q\atop (q,p)=1}\frac{1}{qp^\lambda}
\sum_{b\bmod p^{\lambda-1}}\;
\sideset{}{^\star}\sum_{a\bmod q}
e\left(\frac{ap^{\lambda-1}+bq}{qp^{\lambda-1}}n\right)
\int_{\mathbb{R}}
  g(q,x)e\left(\frac{nx}{Qqp^\lambda}\right)\mathrm{d}x\\
=&\sum_{r=0}^{\lambda}\frac{1}{Q}\sum_{q\leq Q\atop (q,p)=1}\frac{1}{qp^\lambda}
\;\sideset{}{^\star}\sum_{a\bmod qp^{\lambda-r}}
e\left(\frac{na}{qp^{\lambda-r}}\right)
\int_{\mathbb{R}}
  g(q,x)e\left(\frac{nx}{Qqp^\lambda}\right)\mathrm{d}x.
\end{align*}
For $\delta_2(n)$, similarly making a change of variable $q\to qp$, we have
\begin{align*}
\delta_2(n)
=&\frac{1}{Q}\sum_{q\leq Q/p\atop (q,p)=1}\frac{1}{qp^{\lambda+1}}
\sum_{b\bmod p^\lambda}\;
\sideset{}{^\star}\sum_{a\bmod qp}e\left(\frac{a+bpq}{qp^{\lambda+1}}n\right)
\int_{\mathbb{R}}
  g(pq,x)e\left(\frac{nx}{Qqp^{\lambda+1}}\right)\mathrm{d}x\\
&+\frac{1}{Q}\sum_{q\leq Q/p^2}\frac{1}{qp^{\lambda+2}}
\sum_{b\bmod p^\lambda}\;
\sideset{}{^\star}\sum_{a\bmod qp^2}e\left(\frac{a+bp^2q}{qp^{\lambda+2}}n\right)
\int_{\mathbb{R}}
  g(p^2q,x)e\left(\frac{nx}{Qqp^{\lambda+2}}\right)\mathrm{d}x.
\end{align*}

Similarly  $a$ varies over a set of
representatives of the residue classes modulo $qp$ (prime
to $qp$) and $b$ varies over a set of representatives
of the residue classes modulo $p^{\lambda}$, $a+bpq$
varies over a set of representatives of the residue
classes modulo $qp^{\lambda+1}$ prime to $qp^{\lambda+1}$. Then
\begin{align*}
\delta_2(n)
=&\frac{1}{Q}\sum_{q\leq Q/p\atop (q,p)=1}\frac{1}{qp^{\lambda+1}}
\sideset{}{^\star}\sum_{a\bmod qp^{\lambda+1}}
e\left(\frac{an}{qp^{\lambda+1}}\right)
\int_{\mathbb{R}}
  g(pq,x)e\left(\frac{nx}{Qqp^{\lambda+1}}\right)\mathrm{d}x\\
&+\frac{1}{Q}\sum_{q\leq Q/p^2}\frac{1}{qp^{\lambda+2}}
\sum_{b\bmod p^\lambda}\;
\sideset{}{^\star}\sum_{a\bmod qp^2}e\left(n\frac{a+bp^2q}{qp^{\lambda+2}}\right)
\int_{\mathbb{R}}
  g(p^2q,x)e\left(\frac{nx}{Qqp^{\lambda+2}}\right)\mathrm{d}x\\
=&\sum_{s=1}^{[\log Q/\log p]}
\frac{1}{Q}\sum_{q\leq Q/p^s\atop (q,p)=1}\frac{1}{qp^{\lambda+s}}
\sideset{}{^\star}\sum_{a\bmod qp^{\lambda+s}}
e\left(\frac{an}{qp^{\lambda+s}}\right)
\int_{\mathbb{R}}
  g(p^sq,x)e\left(\frac{nx}{Qqp^{\lambda+s}}\right)\mathrm{d}x.
\end{align*}
This proves the lemma.
\end{proof}

\medskip

Now we write
$$
S(N)=\sum_{n\geq 1}A(1, n)W\left(\frac{n}{N}\right)\\
\sum_{m \geq 1\atop p^{\lambda}|m-n}\chi(m)m^{-it}
V\left(\frac{m}{N}\right)\delta\left(\frac{n-m}{p^\lambda}\right),
$$
with compactly supported smooth function
$W$ such that $\rm supp$ $W \subset [1,2]$ and $W^{(j)}\ll 1$ for $j\geq 1$.
Applying Lemma \ref{circle method} with $\lambda\in\mathbb{N}\ (2\leq \lambda \leq k)$
being a parameter to be determined later, we have
$$S(N)\ll \mathfrak{q}^{\varepsilon} \left|D(N)\right|,$$
where
\begin{align*}
D(N)=&\sum_{n\geq1}A(1, n)W\left(\frac{n}{N}\right)\sum\limits_{m\geq1}\chi(m)m^{-it}
V\left(\frac{m}{N}\right)e\left(\frac{(n-m)b}{p^\lambda}\right)\\
&\cdot\frac{1}{Q}\sum_{1\leq q\leq Q} \;\frac{1}{qp^\lambda}\;\sideset{}{^\star}\sum_{a\bmod{qp^\lambda}}
e\left(\frac{(n-m)a}{qp^\lambda}\right)\int_\mathbb{R}g(q,x) e\left(\frac{(n-m)x}{Qqp^\lambda}\right)\mathrm{d}x.
\end{align*}
Exchanging the order of integration and summations we get
\begin{align*}
D(N)=&\frac{1}{Q}
\sum_{1\leq q\leq Q \atop (q,p)=1}\frac{1}{qp^{\lambda}}
\;\sideset{}{^\star}\sum_{a\bmod{qp^\lambda}}
\int_\mathbb{R}g(q, x)\sum\limits_{m\geq1}\chi(m)
e\left(-\frac{am}{qp^{\lambda}}\right)m^{-it}
V\left(\frac{m}{N}\right)e\left(-\frac{mx}{Qqp^\lambda}\right)\\
&\qquad\qquad\qquad\qquad\cdot\sum_{n\geq1}A(1, n)
e\left(\frac{an}{qp^{\lambda}}\right)
W\left(\frac{n}{N}\right)
e\left(\frac{nx}{Qqp^\lambda}\right)\mathrm{d}x.
\end{align*}
Inserting a smooth partition of unity for the $x$-integral  and a dyadic partition for the $q$-sum, we get
$$
D(N)\ll N^\varepsilon \sup_{t^{-B}\ll X\ll t^\varepsilon}\sup_{1\ll R\ll Q}|D(N, X, R)|+O(t^{-A}),
$$
for any large positive constant $A$ and some large constant $B>0$ depending on $A$,
where
$$
\begin{aligned}
D(N, X, R)=&\frac{1}{Q}
\sum_{q\sim R\atop (q,p)=1}\frac{1}{qp^\lambda}
\;\sideset{}{^\star}\sum_{a\bmod{qp^\lambda}}
\int_\mathbb{R}g(q, x)U\left(\frac{\pm x}{X} \right)\\
&\cdot\sum\limits_{n\geq1}A(1, n)
e\left(\frac{an}{qp^{\lambda}}\right)
W\left(\frac{n}{N}\right)e\left(\frac{nx}{Qqp^\lambda}\right)\\
&\cdot\sum\limits_{m\geq1}\chi(m)
e\left(-\frac{am}{qp^{\lambda}}\right)m^{-it}
V\left(\frac{m}{N}\right)e\left(-\frac{mx}{Qqp^\lambda}\right)
\mathrm{d}x.
\end{aligned}
$$
We denote $m$-sum and $n$-sum by $\mathfrak{A}$ and $\mathfrak{B}$, respectively.
\section{Applying Poisson and Voronoi}

 In this section we transform $\mathfrak{A}$ and $\mathfrak{B}$ by the Poisson summation formula and the $\rm GL(3)$
Voronoi formula, respectively, and obtain the following results.
\begin{lemma}\label{A}
We have
$$
\mathfrak{A}=
\frac{N^{1-it}\tau(\chi)\chi(q)}{p^k}
\sum\limits_{m\equiv ap^{k-\lambda}\bmod q}\bar{\chi}
\left(m-ap^{k-\lambda}\right)\mathfrak{J}(m,q,x),
$$
where $\tau(\chi)$ is the Gauss sum and
$$
\mathfrak{J}(m,q,x)=\int_{\mathbb{R}}V(y)y^{-it}
e\left(-\frac{Nxy}{Qqp^\lambda}-\frac{mNy}{qp^k} \right)\mathrm{d}y.
$$
\end{lemma}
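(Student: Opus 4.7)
The plan is to apply Poisson summation to $\mathfrak{A}$ after splitting the variable $m$ into residue classes modulo $qp^k$. The key observation is that the factor $\chi(m)\,e(-am/(qp^\lambda))$ is periodic modulo $qp^k$ (using $\gcd(q,p)=1$ and writing $e(-am/(qp^\lambda))=e(-amp^{k-\lambda}/(qp^k))$), while the remaining factor $m^{-it}V(m/N)e(-mx/(Qqp^\lambda))$ is smooth with support $m\asymp N$. Thus I would write
\[
\mathfrak{A}=\sum_{\beta\bmod qp^k}\chi(\beta)e\!\left(-\frac{a\beta p^{k-\lambda}}{qp^k}\right)\sum_{m\equiv \beta(qp^k)}m^{-it}V\!\left(\frac{m}{N}\right)e\!\left(-\frac{mx}{Qqp^\lambda}\right),
\]
and apply the Poisson summation formula from the first lemma of Section~2.2 to the inner sum.

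Next, I would compute the Fourier transform by the change of variables $m=Ny$; this pulls out $N^{1-it}$ and produces precisely the oscillatory integral $\mathfrak{J}(n,q,x)$ with the Poisson-dual variable $n$. The remaining complete exponential sum
\[
T(n):=\sum_{\beta\bmod qp^k}\chi(\beta)\,e\!\left(\frac{(n-ap^{k-\lambda})\beta}{qp^k}\right)
\]
I would evaluate by the Chinese Remainder Theorem, using $\gcd(q,p^k)=1$ to parametrize $\beta$ as a pair $(c,d)$ modulo $(p^k,q)$. Since $\chi$ has modulus $p^k$, the $d$-sum becomes the orthogonality sum $\sum_{d\bmod q}e((n-ap^{k-\lambda})d\overline{p^k}/q)$, which contributes $q$ together with the congruence $q\mid n-ap^{k-\lambda}$. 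The $c$-sum becomes a twisted character sum $\sum_{c\bmod p^k}^{\star}\chi(c)\,e((n-ap^{k-\lambda})c\bar q/p^k)$; after the substitution $c\mapsto cq$ (which multiplies by $\chi(q)$) and using primitivity of $\chi$, this evaluates to $\chi(q)\,\bar\chi(n-ap^{k-\lambda})\,\tau(\chi)$.

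Substituting back and renaming $n$ to $m$ yields
\[
\mathfrak{A}=\frac{N^{1-it}}{qp^k}\sum_{m\equiv ap^{k-\lambda}(q)}q\,\chi(q)\bar\chi(m-ap^{k-\lambda})\tau(\chi)\,\mathfrak{J}(m,q,x),
\]
which telescopes to the claimed formula after the factor $q$ cancels. The only real subtlety in the plan is tracking the Gauss sum cleanly through the CRT decomposition, in particular the appearance of $\chi(q)$ from the substitution $c\mapsto cq$ and the identity $\sum_{c}^{\star}\chi(c)e(mc/p^k)=\bar\chi(m)\tau(\chi)$ for primitive $\chi$; the rest is routine bookkeeping. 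No convergence issues arise because $V$ is compactly supported, making the Poisson step entirely formal.
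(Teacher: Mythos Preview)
Your proposal is correct and follows essentially the same route as the paper: Poisson summation modulo $qp^k$, the change of variables $m=Ny$ to produce $\mathfrak{J}$, and a CRT factorization of the resulting complete sum into a $q$-part (giving the congruence condition) and a $p^k$-part (giving $\chi(q)\bar\chi(m-ap^{k-\lambda})\tau(\chi)$ after the substitution $c\mapsto cq$). The paper's proof is just a slightly more compressed version of exactly this argument.
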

\begin{proof}
Applying Poisson summation with modulus $qp^k$ on the $m$-sum, we get
\begin{align*}
\mathfrak{A}&=\sum\limits_{\beta \bmod qp^k}\chi(\beta)e\left(-\frac{a\beta}{qp^\lambda}\right)
\sum\limits_{m\equiv\beta \bmod qp^k}m^{-it}V\left(\frac{m}{N}\right)e\left(-\frac{mx}{Qqp^\lambda}\right)\\
&=\frac{N^{1-it}}{qp^k}\sum\limits_{m\in \mathbb{Z}}\sum\limits_{\beta \bmod qp^k}\chi(\beta)e\left(\frac{m-ap^{k-\lambda}}{qp^k}\beta\right)
\int_{\mathbb{R}}V(y)y^{-it}
e\left(-\frac{Nxy}{Qqp^\lambda}-\frac{mNy}{qp^k} \right)\mathrm{d}y.
\end{align*}
Since $(q,p)=1$, the sum $\sum\limits_{\beta \bmod qp^k}\chi(\beta)e\left(\frac{m-ap^{k-\lambda}}{qp^k}\beta\right)$
factors as
\begin{align*}
&\sum\limits_{\beta \bmod q} e\left(\frac{m-ap^{k-\lambda}}{q}\beta\right)\times\sum\limits_{\beta \bmod p^k}\chi(q\beta)e\left(\frac{m-ap^{k-\lambda}}{p^k}\beta\right)\\
&=q \,\delta(m\equiv ap^{k-\lambda} \bmod q)\,\chi(q)\tau(\chi)\,\overline{\chi}(m-ap^{k-\lambda}).
\end{align*}
Now completes the proof.
\end{proof}

\begin{lemma}\label{B}
We have
$$
\mathfrak{B}= qp^\lambda\sum\limits_{\eta=\pm1}\sum\limits_{n_1|qp^\lambda}\sum\limits_{n_2}
\frac{A(n_2,n_1)}{n_1n_2}S\left(\overline{a},\eta n_2;\frac{qp^\lambda}{n_1}\right)\Psi_{x}^{\mathrm{sgn}(\eta)}
\left(\frac{n_1^2 n_2}{q^3p^{3\lambda}}\right),
$$
where $\Psi_{x}^{\mathrm{sgn}(\eta)}(z)$ is defined as in Lemma \ref{voronoiGL3} with $\psi(y)$ replaced by
$W(\frac{y}{N})e\left(\frac{xy}{Qqp^\lambda}\right)$.
\end{lemma}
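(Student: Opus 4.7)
The plan is to apply the $\mathrm{GL}(3)$ Voronoi summation formula (Lemma \ref{voronoiGL3}) directly to the sum $\mathfrak{B}$, with the role of the modulus $q$ in the lemma played by $qp^\lambda$, the parameter $r$ taken to be $1$, and the test function chosen to be
\[
\psi(y) = W\!\left(\frac{y}{N}\right) e\!\left(\frac{xy}{Qqp^\lambda}\right).
\]
Since $W$ is compactly supported in $[1,2]$, this $\psi$ lies in $\mathcal{C}_c^\infty(0,\infty)$ with support in $[N,2N]$, so it satisfies the hypotheses of Lemma \ref{voronoiGL3}. The coprimality hypothesis $(a,qp^\lambda)=1$ required by that lemma is automatic, since $a$ in $\mathfrak{B}$ runs over the starred residues modulo $qp^\lambda$.

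Once these inputs are in place, Lemma \ref{voronoiGL3} gives
\[
\mathfrak{B} = qp^\lambda \sum_{\pm} \sum_{n_1 \mid qp^\lambda} \sum_{n_2=1}^{\infty} \frac{A(n_2,n_1)}{n_1 n_2} \, S\!\left(\bar{a}, \pm n_2;\frac{qp^\lambda}{n_1}\right) \Psi^{\pm}\!\left(\frac{n_1^2 n_2}{q^3 p^{3\lambda}}\right),
\]
where $\Psi^{\pm}$ is defined via (\ref{intgeral transform-3}) with the Mellin transform $\widetilde{\psi}(-s)$ of this specific $\psi$. Relabelling the $\pm$ sum by $\eta \in \{\pm 1\}$ and writing the resulting integral transform as $\Psi^{\mathrm{sgn}(\eta)}_{x}$ to emphasize its dependence on the parameter $x$ (through the oscillating factor in $\psi$) gives precisely the claimed identity.

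There is essentially no technical obstacle: the work is bookkeeping to match the notation in the lemma's statement. The only mild subtlety is noting that the dependence on $x$ is carried inside the test function, which is why the resulting integral transform is written as $\Psi^{\mathrm{sgn}(\eta)}_x$ rather than a fixed $\Psi^{\pm}$; this will matter in later stationary phase analysis of $\Psi^{\mathrm{sgn}(\eta)}_x$, but is not needed for the lemma itself.
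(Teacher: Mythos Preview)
Your proposal is correct and matches the paper's approach exactly: the paper in fact omits the proof of this lemma entirely, since it is an immediate application of the $\mathrm{GL}(3)$ Voronoi formula (Lemma~\ref{voronoiGL3}) with $r=1$, modulus $qp^\lambda$, and test function $\psi(y)=W(y/N)e(xy/(Qqp^\lambda))$, precisely as you describe. Your identification of the coprimality condition and the dependence on $x$ through $\Psi_x^{\mathrm{sgn}(\eta)}$ is also accurate.
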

By Lemma \ref{A} and Lemma \ref{B},
the main sum of $D(N,X,R)$ can be expressed as
\begin{align}\label{s0}
&\frac{N^{1-it}\tau(\chi)}{Qp^{k}}\sum_{q\sim R \atop (q, p)=1}\chi(q)
\;\sideset{}{^\star}\sum_{a\bmod{qp^\lambda}}
\sum_{\eta=\pm1}\sum_{n_1|qp^\lambda}\sum_{n_2}
\frac{A(n_2,n_1)}{n_1n_2}\notag \\
&\cdot\sum_{m\equiv ap^{k-\lambda}\bmod q}\overline{\chi}
\left(m-ap^{k-\lambda}\right)
S\left(\overline{a},\eta n_2;\frac{qp^\lambda}{n_1}\right)\notag\\
&\cdot\int_{\mathbb{R}}g(q,x)U\left(\frac{\pm x}{X}\right)
\Psi_{x}^{\mathrm{sgn}(\eta)}
\left(\frac{n_1^2 n_2}{q^3p^{3\lambda}}\right)
\mathfrak{J}(m,q,x)
\mathrm{d}x.
\end{align}

\begin{lemma}\label{INT}
We have
\begin{enumerate}
  \item If $zN\gg t^\varepsilon$, then $\Psi_{x}^{\eta}(z)$ is negligibly small unless $\mathrm{sgn}(x)=-\mathrm{sgn}(\eta)$ and $\frac{-\eta Nx}{p^\lambda qQ}\asymp(zN)^{1/3}$, in which case we have
      $$
      \Psi_{x}^{\mathrm{sgn}(\eta)}(z)=(zN)^{1/2}e\left( \eta\frac{2{(zp^\lambda qQ)}^{1/2}}{{(-\eta x)}^{1/2}}\right)
      \mathcal{W}\left(\frac{z^{1/2}(p^\lambda qQ)^{3/2}}{N(-\eta x)^{3/2}} \right)+O(t^{-A}),
      $$
      where $\mathcal{W}$ is a certain compactly supported 1-inert function depending on $A$.
  \item If $zN\ll t^\varepsilon$, and $\frac{NX}{p^\lambda RQ}\gg t^\varepsilon$, then $\Psi_{x}^{\mathrm{sgn}(\eta)}(z)\ll t^{-A}$.
  \item If $zN\ll t^\varepsilon$, and $\frac{NX}{p^\lambda RQ}\ll t^\varepsilon$, then $\Psi_{x}^{\mathrm{sgn}(\eta)}(z)\ll t^\varepsilon$.
\end{enumerate}
\end{lemma}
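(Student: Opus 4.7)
The plan is to reduce $\Psi_x^{\mathrm{sgn}(\eta)}(z)$ to a double oscillatory integral via Mellin inversion, evaluate the inner Mellin--Barnes integral asymptotically by Stirling's formula and stationary phase, and then analyse the resulting outer integral in each of the three regimes using the oscillatory-integral lemmas of Section~\ref{prelim}. Expanding $\widetilde{\psi}(-s) = \int_0^\infty W(y/N) e(xy/(Qqp^\lambda)) y^{-s-1}\,dy$ in \eqref{intgeral transform-3} and substituting $y = Nu$, one obtains (after interchanging the $s$- and $u$-integrations)
\begin{equation*}
\Psi_x^{\mathrm{sgn}(\eta)}(z) = \int_0^\infty W(u)\, e\!\left(\frac{xNu}{Qqp^\lambda}\right) u^{-1} I_\eta(zNu)\,du,
\qquad I_\eta(\xi) := \frac{1}{2\pi i}\int_{(\sigma)} \xi^{-s} \gamma_{\mathrm{sgn}(\eta)}(s)\,ds.
\end{equation*}

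Next, moving the contour in $I_\eta$ to $s = i\tau$ and applying Stirling to the three Gamma ratios (using $\sum_j \mu_j = 0$) shows that the combination $\gamma_0 \mp i\gamma_1$ cancels exponentially on one of the two $\tau$-half-lines and, on the other, is asymptotic to $C|\tau|^{3/2} e^{i(3\tau\log(|\tau|/(2\pi e)) + \mathrm{const})}$ modulo lower-order inert factors. For $\xi \gg t^\varepsilon$ the resulting phase has a unique stationary point at $|\tau_0| \asymp \xi^{1/3}$ with second derivative $\asymp \xi^{-1/3}$, and Lemma~\ref{lemma:exponentialintegral}(ii) yields
\begin{equation*}
I_\eta(\xi) = \xi^{2/3}\, e\!\left(\eta \cdot 3\xi^{1/3}\right)\mathcal{W}_0(\xi) + O(\xi^{-A}),
\end{equation*}
with $\mathcal{W}_0$ a $1$-inert function; for $\xi \ll t^\varepsilon$ the trivial bound $I_\eta(\xi) \ll 1$ suffices.

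In case~(1), $\xi = zNu \gg t^\varepsilon$ on $\mathrm{supp}\,W$, and substituting the asymptotic leaves a $u$-integral with phase $\Phi(u) = xNu/(Qqp^\lambda) + \eta\cdot 3(zNu)^{1/3}$. The derivative $\Phi'(u) = xN/(Qqp^\lambda) + \eta (zN)^{1/3} u^{-2/3}$ vanishes only when the two terms have opposite sign, forcing $\mathrm{sgn}(x) = -\mathrm{sgn}(\eta)$; the resulting stationary point $u_0 = z^{1/2}(p^\lambda qQ)^{3/2}/(N(-\eta x)^{3/2})$ lies in the dyadic support of $W$ precisely when $-\eta Nx/(p^\lambda qQ) \asymp (zN)^{1/3}$. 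If this matching condition fails, $|\Phi'(u)|$ is uniformly bounded below and Lemma~\ref{lem: upper bound} gives a negligible contribution; if it holds, a direct computation yields
\begin{equation*}
\Phi(u_0) = \eta\cdot \frac{2(zp^\lambda qQ)^{1/2}}{(-\eta x)^{1/2}}, \qquad \Phi''(u_0) \asymp (zN)^{1/3},
\end{equation*}
so Lemma~\ref{lemma:exponentialintegral}(ii) applied to the outer integral produces the claimed factor $(zN)^{2/3}\cdot(zN)^{-1/6} = (zN)^{1/2}$ multiplied by a $1$-inert function of $u_0$, which is exactly the $\mathcal{W}$ asserted in the lemma.

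For cases~(2) and~(3) we have $I_\eta(zNu)\ll 1$ and $\eta\cdot 3(zNu)^{1/3}\ll t^\varepsilon$ is non-oscillatory, so the $u$-integral reduces to one with essentially linear phase $xNu/(Qqp^\lambda)$ of derivative $\asymp NX/(p^\lambda RQ)$. In case~(2) this is $\gg t^\varepsilon$, and repeated integration by parts via Lemma~\ref{lem: upper bound} gives $\ll t^{-A}$; in case~(3) the trivial bound yields $\ll t^\varepsilon$. The main technical obstacle is the Stirling step: one must execute the asymptotic expansion of $\gamma_\pm(i\tau)$ cleanly enough to verify that $I_\eta(\xi)$ is genuinely a $1$-inert multiple of $\xi^{2/3} e(\eta\cdot 3\xi^{1/3})$, since only then does Lemma~\ref{lemma:exponentialintegral}(ii) propagate the inertness through the outer saddle to produce an inert $\mathcal{W}$ with the precise argument $z^{1/2}(p^\lambda qQ)^{3/2}/(N(-\eta x)^{3/2})$.
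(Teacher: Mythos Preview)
Your argument is correct and is precisely the standard Mellin--Stirling--double-stationary-phase computation; the paper itself gives no proof here, simply citing \cite[\S5.3]{Huang}, and what you have written is essentially the content of that reference. The reduction to the $u$-integral, the asymptotic $I_\eta(\xi)\sim \xi^{2/3}e(3\eta\xi^{1/3})$, the stationary point $u_0=z^{1/2}(p^\lambda qQ)^{3/2}/(N(-\eta x)^{3/2})$, the phase value $\Phi(u_0)=2\eta(zp^\lambda qQ)^{1/2}/(-\eta x)^{1/2}$, and the final amplitude $(zN)^{2/3}\cdot(zN)^{-1/6}=(zN)^{1/2}$ are all right.

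One small point on cases (2)--(3): once $zN\ll t^\varepsilon$ you can no longer invoke the large-$\xi$ asymptotic for $I_\eta$, so the remark that ``$3\eta(zNu)^{1/3}$ is non-oscillatory'' is heuristic only. What you actually need is that $I_\eta(\xi)$ and all its $\xi$-derivatives are $O(1)$ for bounded $\xi$; this follows by shifting the contour in the Mellin integral far to the right (or from the power-series expansion of the underlying Meijer $G$-function), and then the weight $u\mapsto W(u)u^{-1}I_\eta(zNu)$ is genuinely $t^\varepsilon$-inert, so Lemma~\ref{lem: upper bound} applies in case~(2) as you claim. The paper handles this slightly differently in Section~\ref{non-oscillating}, keeping the line at $\sigma=1/2$ and truncating the $\tau$-integral at $|\tau|\ll t^\varepsilon$ via integration by parts in $y$, but the effect is the same.
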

\begin{proof} See ~\cite[5.3]{Huang}.\end{proof}

We first consider the oscillating cases and we will treat
the the non-oscillating case in Section \ref{non-oscillating}.
From now on we make assumption: $\frac{NX}{p^{\lambda}RQ}\gg t^{\varepsilon}$. In this case,
by Lemma \ref{INT}, \eqref{s0} is reduced to
\begin{align}\label{Key}
&\ \ \ \frac{N^{3/2-it}\tau(\chi)}{Qp^{k+3\lambda/2}}\sum_{q\sim R}\frac{\chi(q)}{q^{3/2}}\;\sideset{}{^\star}\sum_{a\bmod{qp^\lambda}}
\sum_{\eta=\pm1}\sum_{n_1|qp^\lambda}\sum_{n_2\asymp\frac{N_1}{n_1^2}}
\frac{A(n_2,n_1)}{\sqrt{n_2}} \notag \\
&\cdot\sum_{m\equiv ap^{k-\lambda}\bmod q}\overline{\chi}
\left(m-ap^{k-\lambda}\right)
S\left(\overline{a},\eta n_2;\frac{qp^\lambda}{n_1}\right)
\int_{\mathbb{R}}V(y)e\left(-\frac{t\log y}{2\pi}-\frac{mNy}{qp^k}\right)\notag\\
&\cdot\int_{\mathbb{R}}g(q,x)e\left(-\frac{Nxy}{Qqp^{\lambda}}
+\eta\frac{2(n_1^2n_2Q)^{1/2}}{(-\eta x)^{1/2}qp^\lambda}\right)
\mathcal{W}\left(\frac{(n_1^2n_{2})^{1/2}Q^{3/2}}{N(-\eta x)^{3/2}}\right)
U\left(\frac{-\eta x}{X}\right)\mathrm{d}x\mathrm{d}y,
\end{align}
where $N_1=\frac{N^2X^3}{Q^3}$. Let $x=-\eta Xv$,
then the resulting $x$-integral in \eqref{Key} becomes
\begin{align}\label{x-integral}
-\eta X\int_{\mathbb{R}}g(q,-\eta Xv)
e\left(\eta\frac{NXyv}{Qqp^{\lambda}}+
\eta\frac{2(n_1^2n_2Q)^{1/2}}{(Xv)^{1/2}qp^\lambda}\right)
\mathcal{W}\left(\frac{(n_1^2n_{2})^{1/2}Q^{3/2}}{N(Xv)^{3/2}}\right)U(v)\mathrm{d}v.
\end{align}
Let
$$
h(v)=\eta\frac{NXyv}{Qqp^{\lambda}}+\eta\frac{2(n_1^2n_2Q)^{1/2}}{(Xv)^{1/2}qp^\lambda}.
$$
Then
$$
h'(v)=\eta\frac{NXy}{Qqp^{\lambda}}-\eta\frac{(n_1^2n_2Q)^{1/2}}{X^{1/2}qp^\lambda}
v^{-3/2}, \quad h''(v)=\eta\frac{3(n_1^2n_2Q)^{1/2}}{2X^{1/2}qp^\lambda}v^{-5/2}.
$$
Note that the solution of $h'(v_0)=0$ is $v_0=\frac{(n_1^2n_2)^{1/3}
Q}{X(Ny)^{2/3}}\asymp 1$, and
$$
h(v_0)=\eta
\frac{3(n_1^2n_2Ny)^{1/3}}{qp^{\lambda}},\quad h''(v_0)=
\frac{3\eta}{2v_0^2}\frac{(n_1^2n_2Q)^{1/2}}
{qp^{\lambda}(Xv_0)^{1/2}}=\frac{3\eta}{2v_0^2}
\frac{(n_1^2n_2Ny)^{1/3}}{qp^{\lambda}}.
$$
By \eqref{g rapid decay}, we have \eqref{x-integral} is equal to
\begin{align}
\frac{(qp^{\lambda})^{1/2}X}{(n_1^2n_2Ny)^{1/6}}
e\left(\eta\frac{3(n_1^2n_2Ny)^{1/3}}{qp^{\lambda}}\right)
g(q,-\eta Xv_0)
\mathcal{U}(v_0)
\mathcal{W}\left(\frac{Q^{3/2}(n_1^2n_{2})^{1/2}}
{N(Xv_0)^{3/2}}\right)+O(t^{-A}),
\end{align}
where $\mathcal{U}$ is a certain compactly supported 1-inert function depending on A. Hence, by letting
$\mathcal{V}(y)=y^{-1/6}V(y)g(q,-\eta Xv_0)\mathcal{U}(v_0)\mathcal{W}\left(\frac{Q^{3/2}(n_1^2n_{2})^{1/2}}
{N(Xv_0)^{3/2}}\right)$,
at the cost of a negligible error, we
can rewrite \eqref{Key} as
\begin{align}
&\frac{N^{4/3-it}X\tau(\chi)}{Qp^{k+\lambda}}
\sum_{q\sim R}\frac{\chi(q)}{q}\;\sideset{}{^\star}\sum_{a\bmod{qp^\lambda}}
\sum_{\eta=\pm1}\sum_{n_1|qp^\lambda}\frac{1}{n_1^{1/3}}
\sum_{n_2\asymp\frac{N_1}{n_1^2}}
\frac{A(n_2,n_1)}{n_2^{2/3}} \notag \\
&\quad\cdot\sum_{m\equiv ap^{k-\lambda}\bmod q}\overline{\chi}
\left(m-ap^{k-\lambda}\right)
S\left(\overline{a},\eta n_2;\frac{qp^\lambda}{n_1}\right)\notag\\
&\quad\cdot\int_{\mathbb{R}}\mathcal{V}(y)e\left(-\frac{t\log y}{2\pi}+\eta\frac{3(n_1^2n_2Ny)^{1/3}}{qp^\lambda}
-\frac{mNy}{qp^k}\right)\mathrm{d}y.
\end{align}
By Lemma \ref{lem: upper bound}, the above integral is negligibly small if $mN/qp^k \gg t^{\varepsilon}\max\{t,(n_1^2n_2N)^{1/3}/qp^\lambda\}$. Recall
$\frac{NX}{p^{\lambda}RQ}\asymp \left(\frac{n_1^2n_2N}{(qp^\lambda)^{3}}\right)^{1/3}$ and $q\sim R$. Thus we only need to consider $m\ll t^{\varepsilon} \max\left\{\frac{tRp^k}{N},\frac{Xp^{k-\lambda}}{Q}\right\}$.
Then we have
\begin{align}
D(N,X,R)\ll N^\varepsilon \sup\limits_{M \ll t^{\varepsilon} \max\left\{\frac{tRp^k}{N},\frac{Xp^{k-\lambda}}{Q}\right\}}
\Big|D(N, X, R, M)\Big|+O(t^{-A}),
\end{align}
where
\begin{align}
D(N, X, R,M)=&\frac{N^{4/3}X}{Qp^{k/2+\lambda}}
\sum_{\eta=\pm1}\sum_{q\sim R\atop(q,p)=1}\frac{\chi(q)}{q}
\sum_{n_1|qp^{\lambda}}\frac{1}{n_1^{1/3}}
\sum_{n_2\asymp\frac{N_1}{n_1^2}}
\frac{A(n_2,n_1)}{n_2^{2/3}} \notag \\
&\cdot\sum_{m\sim M}\sideset{}{^\star}\sum_{a\bmod{qp^\lambda}\atop m\equiv ap^{k-\lambda}\bmod q}\overline{\chi}
\left(m-ap^{k-\lambda}\right)
S\left(\overline{a},\eta n_2;\frac{qp^{\lambda}}{n_1}\right)\notag\\
&\cdot\int_{\mathbb{R}}\mathcal{V}(y)e\left(-\frac{t\log y}{2\pi}+\eta\frac{3(n_1^2n_2Ny)^{1/3}}{qp^{\lambda}}
-\frac{mNy}{qp^k}\right)\mathrm{d}y.
\end{align}

\section{Applying Cauchy and Poisson}

Now we consider $D(N, X, R,M)$. Note that $N_1\asymp\frac{N^2X^3}{Q^3}$.
Then we use the Cauchy-Schwartz inequality
and \eqref{theta3} with $n_1=n_1'n_1''$ , $n_1'|q, \; n_1''|p^\lambda$ to get
$$
D(N, X, R,M)\ll\frac{NX^{1/2}}{Q^{1/2}p^{k/2+\lambda}}
\sum\limits_{\eta=\pm1}
\sum\limits_{(n_{1}',p)=1}\sum\limits_{n_{1}''|p^{\lambda}}
(n_{1}'n_{1}'')^{\theta_3}\;\Omega^{1/2},
$$
with
\begin{align*}
\Omega=\sum\limits_{n_2}W\left(\frac{n_{1}'^2n_{1}''^2n_2}{N_1}\right)
\Bigg|\sum_{q\sim R \atop n_1'|q, (q,p)=1}\frac{1}{q}&\sum_{m\sim M\atop (m,q)=1}
\mathcal{J}(n_{1}'^2n_{1}''^2n_2,m,q)\mathcal{C}(m,q,n_1',n_1'',n_2)
\Bigg|^2,
\end{align*}
where
$$
\mathcal{J}(n_{1}'^2n_{1}''^2n_2,m,q)=\int_{\mathbb{R}}\mathcal{V}(y)e\left(-\frac{t\log y}{2\pi}+\eta\frac{3(n_{1}'^2n_{1}''^2n_2Ny)^{1/3}}{qp^{\lambda}}
-\frac{mNy}{qp^k}\right)\mathrm{d}y,
$$
and
$$
\mathcal{C}(m,q,n_1',n_1'',n_2)=\sideset{}{^\star}\sum_{a\bmod{qp^\lambda}\atop m\equiv ap^{k-\lambda}\bmod q}\overline{\chi}\left(m-ap^{k-\lambda}\right)
S\left(\overline{a},\eta n_2;\frac{qp^{\lambda}}{n_1'n_1''}\right).
$$
Note that
$$
S\left(\overline{a},\eta n_2;\frac{qp^{\lambda}}{n_1'n_1''}\right)
=S\left(\overline{a}\overline{p^{\lambda}/n_1''},\eta n_2\overline{p^{\lambda}/n_1''};\frac{q}{n_1'}\right)
S\left(\overline{a}\overline{q/n_1'},\eta n_2\overline{q/n_1'};\frac{p^{\lambda}}{n_1''}\right)
,
$$
we have
\begin{align}\label{character}
\mathcal{C}(m,q,n_1',n_1'',n_2)
=&\sideset{}{^\star}\sum_{a\bmod{q}\atop m\equiv ap^{k-\lambda} \bmod q}
S\left(\overline{a}\overline{p^{\lambda}/n_1''},\eta n_2\overline{p^{\lambda}/n_1''};\frac{q}{n_1'}\right)\nonumber\\
\cdot&\sideset{}{^\star}\sum_{c\bmod{p^\lambda}}\overline{\chi}(m-cp^{k-\lambda})
 S\left(\overline{c}\overline{q/n_1'},\eta n_2\overline{q/n_1'};\frac{p^{\lambda}}{n_1''}\right)\nonumber\\
 :=&\sideset{}{^\star}\sum_{a\bmod{q}\atop m\equiv ap^{k-\lambda} \bmod q}
 \mathfrak{C}(m,a,q,n_1',n_1'',n_2).
\end{align}

Opening the absolute square in (5.1), we get
\begin{align}\label{Omega1}
\Omega\ll
\sum_{q\sim R \atop n_1'|q, (q,p)=1}
\sum_{q'\sim R \atop n_1'|q', (q',p)=1}\frac{1}{qq'}
\sum_{m\sim M\atop (m,q)=1}
\sum_{m'\sim M\atop (m',q')=1}
\;\sideset{}{^\star}\sum_{a\bmod{q}\atop m\equiv ap^{k-\lambda}\bmod q}
\;\sideset{}{^\star}\sum_{a'\bmod{q'}\atop m'\equiv a'p^{k-\lambda}\bmod q'}\mathcal{T}.
\end{align}
where
\begin{align*}
\mathcal{T}=\sum\limits_{n_2}W&\left(\frac{n_{1}'^2n_{1}''^2n_2}{N_1}\right)
\mathfrak{C}(m,a,q,n_1',n_1'',n_2)
\overline{\mathfrak{C}(m',a',q',n_1',n_1'',n_2)}
\\&\cdot\mathcal{J}(n_{1}'^2n_{1}''^2n_2,m,q)
\overline{\mathcal{J}(n_{1}'^2n_{1}''^2n_2,m',q')}.
\end{align*}

Let $\widehat{q}=q/n_1'$, $\widehat{q'}=q'/n_1'$
and $\widehat{p}=p^{\lambda}/n_1''$.
Applying the Poisson summation with modulus
$\widehat{q}\widehat{q'}\widehat{p}$,
we arrive at
\begin{align}\label{T1}
\mathcal{T}=\frac{N_1}{\widehat{q}\widehat{q'}\widehat{p}n_{1}'^2n_{1}''^2}
\sum\limits_{n_2}\mathfrak{C}^*(n_2)\mathcal{H}(n_2),
\end{align}
where
\begin{align}\label{Cn2}
\mathfrak{C}^*(n_2)=\sum\limits_{\beta\bmod\widehat{q}\widehat{q'}\widehat{p}}
\mathfrak{C}(m,a,q,n_1',n_1'',\beta)
\overline{\mathfrak{C}(m',a',q',n_1',n_1'',\beta)}
e\left(\frac{\eta n_2\beta}{\widehat{q}\widehat{q'}\widehat{p}}\right),
\end{align}
and
\begin{align}\label{H1(n2)}
\mathcal{H}(n_2)=\int_\mathbb{R}W(u)
\mathcal{J}(N_1u,m,q)
\overline{\mathcal{J}(N_1u,m',q')}
e\left(\frac{-n_2N_1u}{qq'p^{\lambda}n_1''} \right)\mathrm{d}u.
\end{align}

\subsection{$\mathbf{\frac{NX}{p^\lambda RQ}\ll t^{1-\varepsilon}}$}

We first consider
\begin{align*}
\mathcal{J}(N_1u, m, q)=\int_{\mathbb{R}}\mathcal{V}(y)
e\left(-\frac{t\log y}{2\pi}+\eta\frac{3(N_1uNy)^{1/3}}{qp^{\lambda}}
-\frac{mNy}{qp^k}\right)\mathrm{d}y.
\end{align*}
Write
\begin{align*}
g(y)=-\frac{t\log y}{2\pi}+Dy+3C(uy)^{1/3},
\end{align*}
where $u\in [2/3,3]$,
\begin{align}\label{DC}
D=-\frac{mN}{qp^k} \qquad
C=\eta\frac{(N_1N)^{1/3}}{qp^{\lambda}}.
\end{align}
Note that $C\asymp \frac{NX}{p^\lambda RQ}\ll t^{1-\varepsilon}$. We have
$$g'(y)=-\frac{t}{2\pi}y^{-1}+D+Cu^{1/3}y^{-2/3}\gg t+|D|,$$
then the integral $\mathcal{J}(N_1u, m, q)$ is negligibly small unless $D\asymp t$.
The stationary point $y_*$ which
is the solution to the equation $g'(y)=0$, i.e., $-\frac{t}{2\pi}y^{-1}+D+Cu^{1/3}y^{-2/3}=0$
can be written as $y_0+y_1+y_2+\cdots$ with
\begin{align*}
y_0&=\frac{t}{2\pi D}\asymp 1,\\
y_1&=-\frac{2\pi Cu^{1/3}}{t}y_0^{4/3}\asymp \frac{C}{t},\\
y_2&=\frac{4\pi^2 C^2u^{2/3}}{3t^2}y_0^{5/3}\asymp
\left(\frac{C}{t}\right)^2,\\
y_{j}&=f_{j}\left(y_0\right)\left(\frac{Cu^{1/3}}{t}\right)^{j}
\ll \left(\frac{C}{t}\right)^{j}, \quad j\geq 3.
\end{align*}
Here $y_0$ satisfies that $-\frac{t}{2\pi}y_0^{-1}+D=0$, $y_1$
satisfies that $-\frac{t}{2\pi}y_0^{-1}(1+(-1)y_1/y_0-1)+Cu^{1/3}y_0^{-2/3}=0$,
$y_2$ satisfies that $\frac{t}{2\pi}y_0^{-1}\left(
y_0^{-1}\left(\frac{y_1}{y_0}\left(1-\frac{y_1}{y_0}\right)
-\frac{y_1}{y_0}\right)+\frac{y_2}{y_0}\right)
+Cu^{1/3}y_0^{-2/3}(1+\left(-\frac{2}{3}\right)\frac{y_1}{y_0}-1)=0$,
and $f_{j}(y_0)\asymp 1$ is a function of polynomially growth, depending only on
$j$.

Recall that $\mathcal{V}(y)=y^{-1/6}V(y)g(q,-\eta Xv_0)\mathcal{U}(v_0)\mathcal{W}\left(\frac{Q^{3/2}(n_1^2n_{2})^{1/2}}
{N(Xv_0)^{3/2}}\right)$, $v_0=\frac{Q(n_1^2n_2)^{1/3}
}{X(Ny)^{2/3}}\asymp 1$, and \eqref{g rapid decay}.
So it is easy to check the conditions in Lemma \ref{lemma:exponentialintegral}.
By using the Taylor expansion,
we have
$$
g(y_*)=-\frac{t}{2\pi} \log y_0+Dy_0+g_1(D)Cu^{1/3}
+g_2(D)C^2u^{2/3}+O\left(\frac{|C|^3}{t^2}\right),
$$
for the functions $g_1(D)=3y_0^{1/3}\asymp 1$
and $g_2(D)=-\frac{4\pi}{9t}y_0^{2/3}\ll\frac{1}{t}$.
Note $g''(y_0)\asymp t$, $\mathcal{J}(N_1u, m, q)$ is essentially reduced to
\begin{align}\label{J-stationary phase-2}
\mathcal{J}(N_1u, m,q)=\frac{1}{\sqrt{t}}y_0^{-it}e\left(Cu^{\frac{1}{3}}g_1(D)
+C^2u^{\frac{2}{3}}g_2(D)
+O\left(\frac{|C|^3}{t^2}\right)\right).
\end{align}
To estimate $\mathcal{H}(n_2)$, we use the
 strategy in \cite{HX} and \cite{LS} to get
\begin{lemma}\label{integral-lemma1}
Let $N_3=\frac{Q^2Rn_1''}{NX^2}t^{\varepsilon}$ and $N_3'=t^{\varepsilon}
\left(\frac{Q^3R^2p^{\lambda}n_1''}{N^2X^3}+\frac{Nn_1''}{t^2Rp^{2\lambda}}\right)$.
Assume $\frac{NX}{p^\lambda RQ}\ll t^{1-\varepsilon}$.
\begin{enumerate}
 \item  We have $\mathcal{H}(n_2) \ll t^{-A}$ unless $n_2 \ll N_3$, in which case one has
$$\mathcal{H}(n_2) \ll \frac{1}{t^{1-\varepsilon}}.$$

\item If $N_3' \ll n_2 \ll N_3$, we have
\begin{align}\label{Integral(2)}
\mathcal{H}(n_2) \ll \frac{Q^{3/2}Rp^{\lambda/2}n_1''^{1/2}}{t^{1-\varepsilon}NX^{3/2}n_2^{1/2}}.
\end{align}
\item If $q=q'$, we have $\mathcal{H}(0) \ll t^{-A}$ unless $|m-m'|\ll t^{\varepsilon}\left(\frac{N^2X^2M}{Q^2R^2p^{2\lambda}t^2}
    +\frac{MQRp^{\lambda}}{NX}\right)$.
\end{enumerate}
 \end{lemma}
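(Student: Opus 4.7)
My strategy is to plug the stationary-phase expansion \eqref{J-stationary phase-2} for both $\mathcal{J}$-factors into $\mathcal{H}(n_2)$, reducing it to a one-dimensional oscillatory $u$-integral, and then analyze that integral via Lemma \ref{lem: upper bound} and Lemma \ref{lemma:exponentialintegral} after the key change of variable $u=v^3$. Explicitly, the substitution produces
\[
\mathcal{H}(n_2)=\frac{(y_0/y_0')^{-it}}{t}\int W(u)\,\mathcal{K}(u)\,e\!\left(\alpha u^{1/3}+\beta u^{2/3}+\gamma u\right)\mathrm{d}u+O(t^{-A}),
\]
where $\mathcal{K}$ is a $1$-inert weight, $\alpha=Cg_1(D)-C'g_1(D')$, $\beta=C^2g_2(D)-C'^2g_2(D')$, and $\gamma=-n_2N_1/(qq'p^{\lambda}n_1'')$, with primes denoting the $(m',q')$-quantities. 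Using $g_1\asymp 1$, $g_2\ll 1/t$, and the standing hypothesis $|C|\asymp NX/(Rp^{\lambda}Q)\ll t^{1-\varepsilon}$, one reads off $|\alpha|\ll NX/(Rp^{\lambda}Q)$, $|\beta|\ll |\alpha||C|/t$, and $|\gamma|\asymp n_2N^2X^3/(R^2Q^3p^{\lambda}n_1'')$.

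Setting $u=v^3$ converts the phase into the cubic polynomial $\tilde\phi(v)=\alpha v+\beta v^2+\gamma v^3$ on $[1,2^{1/3}]$, so $\tilde\phi'(v)=\alpha+2\beta v+3\gamma v^2$ and $\tilde\phi''(v)=2\beta+6\gamma v$. For part~(1), in the regime $|\gamma|\gg t^{\varepsilon}|\alpha|$ (equivalently $n_2\gg N_3$) one has $|\tilde\phi'|\asymp|\gamma|$ uniformly on $[1,2^{1/3}]$ and Lemma \ref{lem: upper bound} yields a negligible contribution; in the complementary range the trivial bound on the $v$-integral gives $\mathcal{H}(n_2)\ll t^{\varepsilon}/t$. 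For part~(2), in the middle range $N_3'\ll n_2\ll N_3$ the first summand of $N_3'$ forces $|\gamma|\gg t^{\varepsilon}$ and the second summand forces $|\gamma|$ to dominate the $O(|C|^3/t^2)$ phase perturbation from the error term in \eqref{J-stationary phase-2}, since $|\gamma|\gg|C|^3/t^2$ rearranges to $n_2\gg Nn_1''/(Rp^{2\lambda}t^2)$; consequently $|\tilde\phi''(v)|\asymp|\gamma|$ on $[1,2^{1/3}]$ and all higher $v$-derivatives are $\ll|\gamma|$, so Lemma \ref{lemma:exponentialintegral} bounds the $v$-integral by $1/\sqrt{|\gamma|}$, which unwinds to the claimed estimate. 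For part~(3), $q=q'$ and $n_2=0$ force $\gamma=0$ and $C=C'$, so $\tilde\phi(v)=\alpha v+\beta v^2$ up to the cubic tail $O(|C|^3/t^2)$, and Lemma \ref{lem: upper bound} forces negligibility unless $|\tilde\phi'(v)|\ll t^{\varepsilon}$ somewhere. Via mean-value expansions, with $D-D'=-(m-m')N/(qp^k)$ and $|D|\asymp t$ (so $M\asymp tRp^k/N$), one computes $|\alpha|\asymp |m-m'|N^2X/(R^2Qp^{k+\lambda}t)$; the condition $|\alpha|\ll t^{\varepsilon}$ delivers the second summand $MQRp^{\lambda}/(NX)$ of the stated bound on $|m-m'|$, while the analogous computation for the cubic coefficient, in the regime where the $O(|C|^3/t^2)$ tail of \eqref{J-stationary phase-2} governs $\tilde\phi'$, delivers the first summand.

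The conceptually essential point, and the main technical obstacle, is the cubic substitution $u=v^3$ in part~(2). Without it, the $\gamma u$ contribution is invisible to $\phi''(u)$, and a direct stationary-phase estimate yields only a bound of size $1/\sqrt{|\alpha|}$, which is independent of $n_2$ and far too weak to absorb in the Cauchy--Schwarz step that follows. After the substitution, the term $\gamma v^3$ contributes $6\gamma v$ to $\tilde\phi''$, producing exactly the $n_2^{-1/2}$ savings the lemma needs. A subsidiary delicate aspect is bookkeeping the $O(|C|^3/t^2)$ error carried by \eqref{J-stationary phase-2}: insisting that this error be dominated by the genuine oscillation from $\gamma v^3$ is precisely what carves out the second summand of $N_3'$ and, in part~(3), shapes the first summand of the bound on $|m-m'|$.
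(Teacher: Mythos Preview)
Your approach is essentially identical to the paper's: insert the stationary-phase expansion \eqref{J-stationary phase-2}, make the substitution $u=v^3$, and analyze the resulting cubic-polynomial phase $\tilde\phi(v)=\alpha v+\beta v^2+\gamma v^3$. Parts (1) and (3) are handled correctly and match the paper.

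There is, however, a gap in your treatment of part (2). You assert that the two conditions encoded in $N_3'$ --- namely $|\gamma|\gg t^{\varepsilon}$ and $|\gamma|\gg |C|^3/t^2$ --- yield $|\tilde\phi''(v)|\asymp|\gamma|$ on $[1,2^{1/3}]$. But $\tilde\phi''(v)=2\beta+6\gamma v$, so this requires $|\beta|\ll|\gamma|$; your stated bound $|\beta|\ll|\alpha|\,|C|/t$ together with the crude $|\alpha|\ll|C|$ only gives $|\beta|\ll|C|^2/t$, and since $|C|\ll t^{1-\varepsilon}$ this quantity is not dominated by either of your lower bounds on $|\gamma|$. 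The paper closes this gap with one extra step: from $|\beta|\ll|\alpha|t^{-\varepsilon}$ (your bound combined with $|C|\ll t^{1-\varepsilon}$) it first argues, via Lemma~\ref{lem: upper bound}, that the $v$-integral is negligible unless $|\alpha|\asymp|\gamma|$; only in that regime does one obtain $|\beta|\ll|\alpha|t^{-\varepsilon}\asymp|\gamma|t^{-\varepsilon}$, hence $|\tilde\phi''|\asymp|\gamma|$, after which the second-derivative test gives $1/\sqrt{|\gamma|}$. Inserting this intermediate restriction repairs your argument.
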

\begin{proof}
Plugging \eqref{J-stationary phase-2} into \eqref{H1(n2)}, the evaluation of $\mathcal{H}(n_2)$ is therefore reduced to estimating
\begin{align}\label{reduce}
\frac{1}{t}&\int_\mathbb{R}W(u)e\left(-\frac{n_2N_1u}
{q_1q_2p^{\lambda}n_1''}+Cu^{1/3}(g_1(D)
-g_1(D'))
+C^2u^{2/3}(g_2(D)
-g_2(D'))
+O\left(\frac{|C|^3}{t^2}\right)\right)
\mathrm{d}u,
\end{align}
where $D'=-\frac{m'N}{qp^k}$. Making a change of variable $u\to u^3$, the phase function of exponential function in the above
integral equals
$$-\frac{n_2N_1u^3}{q_1q_2p^{\lambda}n_1''}
+(g_1(D)-g_1(D'))Cu+(g_2(D)-g_2(D'))C^2u^{2}
+O\left(\frac{|C|^3}{t^2}\right).$$
Applying integration by parts, we get $\mathcal{H}(n_2)\ll t^{-A}$ if $n_2 \gg N_3$, which gives the first result in (1).
The second statement in (1) is obvious, since we use the trivial bound in \eqref{reduce}.

It is easy to see that
\begin{align}\label{CDg2}
C^2(g_2(D)-g_2(D'))\ll \frac{C(y_0^{1/3}+y_0'^{1/3})}{t}
\left|C(y_0^{1/3}-y_0'^{1/3})\right|
\ll\left|C(g_1(D)-g_1(D'))\right|t^{-\varepsilon},
\end{align}
where we have used $y_0'=\frac{t}{2\pi D'}\asymp 1$ and $C\asymp \frac{NX}{p^\lambda RQ}\ll t^{1-\varepsilon}$. Therefore, if $N_3' \ll n_2 \ll N_3$,
the $u$-integral is $O(t^{-A})$ unless $\left|C(g_1(D)-g_1(D'))\right|\asymp \frac{n_2N_1u}{qq'p^{\lambda}n_1''}$. By second derivative test, we get
\eqref{Integral(2)}.

For $n_2=0$ and $q=q'$, we may rewrite the above $u$-integral as
$$\frac{1}{t}\int_\mathbb{R}W(u^3)u^2e\left((Cg_1(D)-C'g_1(D'))u
+(C^2g_2(D)-C'^2g_2(D'))u^2+O\left(\frac{|C|^3}{t^2}\right)\right)
\mathrm{d}u.$$
Notice that $\frac{g_1(D)}{m^{1/3}}=\frac{g_1(D')}{m'^{1/3}}$ and $C(g_1(D)-g_1(D'))=\frac{Cg_1(D)}{m^{1/3}}(m^{1/3}-m'^{1/3})$. So by partial
integration and \eqref{CDg2}, the $u$-integral is $O(t^{-A})$ unless
$$m^{1/3}-m'^{1/3}\ll \left(\frac{C^3}{t^2}+1\right)\frac{M^{1/3}t^{\varepsilon}}{C}.$$
This actually proves the result in (3).
\end{proof}
\subsection{$\mathbf{\frac{NX}{p^\lambda RQ}\gg t^{1-\varepsilon}}$}
It is easy to see that $R\ll \frac{N^{1+\varepsilon}X}{p^\lambda tQ}$.
We have the following results.
\begin{lemma}\label{integral-lemma2}
Let $N_3$ be defined as in Lemma \ref{integral-lemma1}. Then, if $\frac{NX}{p^\lambda RQ}\gg t^{1-\varepsilon}$,
one has the following estimates.
\begin{enumerate}
 \item
 If $n_2 \gg N_3$, we have $\mathcal{H}(n_2) \ll t^{-A}$.
\item
If $n_2 \ll N_3$, we have $$\mathcal{H}(n_2)
\ll \frac{p^\lambda RQ}{N^{1-\varepsilon}X}.$$
    \end{enumerate}
\end{lemma}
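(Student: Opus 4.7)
The plan is to run the same strategy as in the proof of Lemma \ref{integral-lemma1}, but in the complementary regime where $C := \eta(N_1N)^{1/3}/(qp^\lambda)$, of magnitude $NX/(p^\lambda RQ)$, is now the dominant term in the $y$-phase of $\mathcal{J}(N_1u,m,q)$. Concretely, the phase $g(y) = -(t/2\pi)\log y + Dy + 3C(uy)^{1/3}$ has second derivative
\[
g''(y) = \frac{t}{2\pi y^2} - \frac{2Cu^{1/3}}{3 y^{5/3}},
\]
whose magnitude is $\asymp C \gg t^{1-\varepsilon}$ for $y\asymp 1$, so we are again in the setting of Lemma \ref{lemma:exponentialintegral}, only with $C$ playing the role that $t$ played before.

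Applying Lemma \ref{lemma:exponentialintegral} in $y$, one finds that if $|D|\not\asymp C$ the equation $g'(y)=0$ has no solution in the support of $\mathcal{V}$ and $\mathcal{J}(N_1u,m,q)\ll t^{-A}$; otherwise there is a unique stationary point $y_*(u;m,q)\asymp 1$, and
\[
\mathcal{J}(N_1u,m,q) = \frac{e^{ig(y_*(u;m,q))}}{\sqrt{g''(y_*)}}\,F(u) + O(t^{-A})
\]
with $F$ a $1$-inert function and $|g''(y_*)|\asymp C$, so in particular $|\mathcal{J}|\ll C^{-1/2}$. By the envelope theorem, the resulting $u$-phase satisfies $\partial_u g(y_*(u;m,q)) = Cu^{-2/3}y_*^{1/3}\asymp C$, with higher derivatives also of size $\asymp C$.

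For part (2), substituting $|\mathcal{J}(N_1u,m,q)\overline{\mathcal{J}(N_1u,m',q')}|\ll C^{-1}$ pointwise into \eqref{H1(n2)} and integrating trivially over $u\asymp 1$ yields $\mathcal{H}(n_2)\ll C^{-1} = p^\lambda RQ/(NX)$, as claimed. For part (1), one integrates by parts in $u$: the total $u$-phase of $\mathcal{H}(n_2)$ is
\[
\Phi(u)=-\frac{n_2 N_1 u}{qq' p^\lambda n_1''}+g(y_*(u;m,q))-g(y_*(u;m',q')),
\]
with derivative $-n_2 N_1/(qq' p^\lambda n_1'')+O(C)$. A direct calculation gives $N_3 N_1/(R^2 p^\lambda n_1'')\asymp Ct^\varepsilon$ for $N_3$ as in Lemma \ref{integral-lemma1}, so once $n_2\gg N_3$ the linear term dominates and $|\Phi'(u)|\gg Ct^\varepsilon$. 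Since higher derivatives of the amplitude and of $g(y_*(u;\cdot,\cdot))$ are controlled uniformly by the $1$-inertness of $\mathcal{V}$, $\mathcal{W}$, $\mathcal{U}$ together with the envelope theorem, iterated integration by parts yields $\mathcal{H}(n_2)\ll t^{-A}$, in worst case covering also $q=q'$, $m=m'$ where no cancellation occurs between the two $\mathcal{J}$-phases.

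The main obstacle will be the clean execution of the stationary-phase step in the $C\gg t^{1-\varepsilon}$ regime: one has to verify the inert-function hypotheses of Lemma \ref{lemma:exponentialintegral}(2) for the composite weight $\mathcal{V}(y) = y^{-1/6}V(y)g(q,-\eta Xv_0)\mathcal{U}(v_0)\mathcal{W}(Q^{3/2}(n_1^2n_2)^{1/2}/(N(Xv_0)^{3/2}))$, in which $v_0 = Q(n_1^2n_2)^{1/3}/(X(Ny)^{2/3})$ depends on $y$, and to track the dependence of $y_*(u)$ on $u$ well enough that the derivatives $g(y_*(u))^{(j)}$ remain genuinely of order $C$ and are not inflated by the subdominant $-(t/2\pi)\log y$ contribution.
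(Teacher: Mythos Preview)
Your approach works for part (1) and for the generic case of part (2), but there is a genuine gap in part (2). The assertion that $g''(y)=\frac{t}{2\pi y^2}-\frac{2Cu^{1/3}}{3y^{5/3}}$ has magnitude $\asymp C$ for $y\asymp 1$ is false in general: when $\eta=1$ (so $C>0$) and $t\asymp C$, which is perfectly consistent with the hypothesis $C\gg t^{1-\varepsilon}$, the two terms have opposite signs and can cancel, so $g''$ may vanish on the support of $\mathcal{V}$. In that regime the hypothesis $\varrho''(y)\gg H/Z^2$ of Lemma~\ref{lemma:exponentialintegral}(2) fails and you cannot conclude $|\mathcal{J}|\ll C^{-1/2}$ pointwise. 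The ``main obstacle'' you flag at the end (verifying inertness of $\mathcal{V}$ and tracking $y_*(u)$) is not the real issue; the real issue is that the stationary-phase step itself breaks down.

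The paper handles exactly this degenerate case by a different route: instead of extracting a stationary-phase expansion in $y$, it opens $\mathcal{H}(n_2)$ as a triple integral over $y_1,y_2,u$, integrates in $u$ first, and observes that the $u$-phase $\frac{3(N_1uN)^{1/3}}{qp^\lambda}(y_1^{1/3}-y_2^{1/3})$ forces $|y_1-y_2|\ll qp^\lambda/(N_1N)^{1/3}\asymp C^{-1}$ up to negligible error. Integrating trivially over this thin diagonal region in $(y_1,y_2)$ then yields $\mathcal{H}(n_2)\ll C^{-1}$ directly, recovering the same bound without any lower-bound assumption on $g''$. You should either insert this alternative argument for the range $t\asymp C$, $\eta=1$, or replace the second-derivative bound by a van der Corput third-derivative argument there; as written, the proposal does not cover that case.
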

\begin{proof}
The first result can be done by applying
integration by parts with respect to the $u$-integral.
To prove the second assertion, we observe the second
derivative of the phase function in $\mathcal{J}(N_1u, m, q)$ is
$$
\frac{t}{\pi}y^{-2}-\eta
\frac{2(N_1uN)^{1/3}}{3qp^{\lambda}}y^{-5/3}.
$$
For this to be smaller than $\frac{(N_1N)^{1/3}}{qp^{\lambda}}$ in magnitude one at least needs  $t\asymp\frac{(N_1N)^{1/3}}{qp^{\lambda}}$ and $\eta=1$. Except this case we have
$$\mathcal{J}(N_1u, m, q)\ll \sqrt{\frac{qp^{\lambda}}{(N_1N)^{1/3}}} \asymp \left(\frac{NX}{p^\lambda RQ}\right)^{-1/2} ,$$
by the second derivative bound and $N_1\asymp \frac{N^2X^3}{Q^3}$. In the special case we have
\begin{align*}
\mathcal{H}(n_2)&\ll \mathop{\int\int}\mathcal{V}(y_1)\mathcal{V}(y_2)\left|\int W(u)e\left(\frac{3(N_1uN)^{1/3}}{qp^{\lambda-r}}
(y_1^{1/3}-y_2^{1/3})\mathrm{d}u\right)\right|
\mathrm{d}y_1\mathrm{d}y_2\\
&\ll \int\int_{|y_1-y_2|\ll \frac{qp^{\lambda}}{(N_1N)^{1/3}}}\mathcal{V}(y_1)
\mathcal{V}(y_2)\mathrm{d}y_1\mathrm{d}y_2
\ll\frac{qp^{\lambda}}{(N_1N)^{1/3}}.
\end{align*}
The lemma follows.
\end{proof}

We have the following estimates for the character sum $\mathfrak{C}^*(n_2)$ ,
whose proofs we postpone to Section \ref{10}.
\begin{lemma}\label{C*{n2}}
Assume $\lambda\leq 2k/3$. Let $\lambda=2\mu+\delta$
with $\delta=0$ or $1$,  $p^\ell| n_2$ with $\ell\geq 0$.
\begin{enumerate}
\item
For $n_1''\neq1$, we have
\begin{align*}
\mathfrak{C}^*(n_2)=0.
\end{align*}
 \item
 For $n_2=0$, $\mathfrak{C}_2^*$ vanishes unless
$mq^2\equiv m'q'^2\bmod p^{\mu}$ and $q=q'$, in this case we have
\begin{align*}
\mathfrak{C}^*(n_2)\ll \widehat{q}^2(\widehat{q},m-m')p^{3\lambda}.
\end{align*}
\item
For $n_2\neq 0$, we have
\begin{align*}
\mathfrak{C}^*(n_2)\ll
\widehat{q}\widehat{q'}(\widehat{q},\widehat{q'},n_2)
p^{5\lambda/2+\min\{\ell,\mu\}+3\delta/2}.
\end{align*}
\end{enumerate}
\end{lemma}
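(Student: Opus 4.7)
Since $(qq',p)=1$, the Chinese Remainder Theorem factorizes
$$\mathfrak{C}^*(n_2)=\mathfrak{C}_{(q)}^*(n_2)\cdot\mathfrak{C}_{(p)}^*(n_2),$$
where $\mathfrak{C}_{(q)}^*$ is the sum over $\beta\bmod \widehat{q}\widehat{q'}$ carrying only the Kloosterman factors modulo $\widehat{q}$ and $\widehat{q'}$, while $\mathfrak{C}_{(p)}^*$ is the sum over $\beta\bmod\widehat{p}=p^\lambda/n_1''$ carrying the character sums over $c,c'\bmod p^\lambda$ together with the Kloosterman factors modulo $\widehat{p}$. The plan is to evaluate each piece separately and combine. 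For $\mathfrak{C}_{(q)}^*$ the standard route is to open both Kloosterman sums, execute the $\beta$-sum by orthogonality, and bound by divisors; this yields the generic estimate $\widehat{q}\widehat{q'}(\widehat{q},\widehat{q'},n_2)$, while in the specialization $n_2=0$ the congruence collapses to $q=q'$ and the remaining sum becomes a Ramanujan sum $c_{\widehat{q}}(m-m')$, producing the factor $(\widehat{q},m-m')$ appearing in part (2).

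The heart of the proof is the evaluation of $\mathfrak{C}_{(p)}^*$. Using the primitivity of $\chi\bmod p^k$, I would perform a $p$-adic Taylor expansion
\begin{align*}
\overline{\chi}(m-cp^{k-\lambda})=\overline{\chi}(m)\,e\!\left(-\frac{\alpha_\chi\,\overline{m}\,c}{p^\lambda}+\frac{\gamma_\chi\,\overline{m}^{\,2}\,c^2}{2\,p^{2\lambda-k}}\right),
\end{align*}
where $\alpha_\chi\in(\mathbb Z/p\mathbb Z)^\times$, and where the cubic remainder has denominator $p^{3\lambda-2k}$, hence is integral precisely under the hypothesis $\lambda\le 2k/3$. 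Writing $\lambda=2\mu+\delta$ and splitting $c$ (and similarly $c'$) over the two $p$-adic scales $p^\mu$ and $p^{\mu+\delta}$, the linear piece of the character decouples into contributions on each scale while the quadratic piece lives only on the finer scale. Executing the coarser-scale $c$-sum first, by orthogonality of additive characters of period $p^\mu$, pins down a congruence on $\beta$ in terms of $\alpha_\chi,m,q$; the surviving finer-scale sum is a quadratic Gauss sum of square-root size $p^{(\mu+\delta)/2}$, and the residual $\beta$-sum is then handled by a final orthogonality together with the Weil bound on the Kloosterman factors modulo $\widehat{p}$.

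The three cases now read off from the structure of this evaluation. In case (1), $n_1''\ne 1$ makes $\widehat{p}=p^{\lambda-j}$ a strict divisor of $p^\lambda$, so the phases produced by the Kloosterman sums modulo $\widehat{p}$ cannot match the character modulo $p^\mu$ produced by $\chi$ at the coarser scale (the relevant coefficient there remains coprime to $p$); the coarser-scale sum then vanishes by orthogonality and $\mathfrak{C}_{(p)}^*=0$. In case (2), $n_2=0$ degenerates the Kloosterman factors into Ramanujan sums and the congruence produced at the coarser scale reduces to $mq^2\equiv m'q'^2\bmod p^\mu$, giving the stated bound after combining with the $q=q'$ condition from the $q$-part. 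In case (3), the Weil bound $|S(a,b;\widehat{p})|\ll(a,b,\widehat{p})^{1/2}\widehat{p}^{1/2+\varepsilon}$ on the Kloosterman factors (producing the gain $p^{\min\{\ell,\mu\}/2}$ from $p^\ell\mid n_2$), combined with the square-root size of the quadratic Gauss sum and the $p^\mu$-orthogonality collapse, yields after squaring the exponent $5\lambda/2+\min\{\ell,\mu\}+3\delta/2$.

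The main obstacle will be the bookkeeping of the Taylor expansion—choosing the $p$-adic scale decomposition of $c$ so that the quadratic Gauss sum is genuinely of square-root size and the coarser-scale orthogonality cleanly extracts the congruence—together with carefully tracking the accumulated powers of $p$ through the Gauss, Weil and orthogonality steps so that the exact exponent $5\lambda/2+\min\{\ell,\mu\}+3\delta/2$ in case (3) emerges as stated.
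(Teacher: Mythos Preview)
Your CRT factorization $\mathfrak{C}^*=\mathfrak{C}_{(q)}^*\cdot\mathfrak{C}_{(p)}^*$ and the idea of $p$-adically splitting $c$ at the scale $p^{\mu}$ so as to exploit the additive-character behaviour of $\chi(1+zp^{k-\mu})$ are both correct and are indeed how the paper begins. The gap is in your handling of the Kloosterman factors inside $\mathfrak{C}_{(p)}^*$. The sum $S(\overline{c}\,\overline{\widehat q},\,\beta\overline{\widehat q};\widehat p)$ couples $c$ and $\beta$ through $\overline{c}$, so you cannot simultaneously (i) run the coarser-scale $c$-sum by pure additive orthogonality to pin down a congruence on $\beta$, and (ii) keep the Kloosterman sums intact for a final Weil bound. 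If you keep them as black boxes, the $c$-sum is not over an additive character; if you open them (or use the explicit Sali\'e evaluation for prime-power moduli), the phase in the finer-scale variable $c_0$ acquires a term of the shape $\sqrt{\overline{c_0}\,\beta}/p^{\lambda}$, and the finer-scale $c$-sum is no longer a quadratic Gauss sum. Relatedly, the factor $p^{\min\{\ell,\mu\}}$ does not arise from a gcd $(n_2,\widehat p)$ in a Weil bound---$n_2$ never sits inside the Kloosterman arguments, only in the outer twist $e(n_2\beta/\widehat q\widehat{q'}\widehat p)$.

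The paper proceeds differently at the key step. It opens both Kloosterman sums and executes the $\beta$-sum \emph{first} (this is where orthogonality is clean, since $\beta$ now appears linearly), producing a congruence that links the opened variables $\alpha,\alpha'$ and $n_2$. It then splits $c_1,c_2$ and $\alpha$ at scale $p^{\mu}$; the coarser-scale $c$-sums use only the \emph{linear} approximation $\chi(1+zp^{k-\mu})=e(\xi z/p^{\mu})$ and collapse to delta conditions on $\alpha$ and $\alpha'$ (this is where $n_1''\neq 1$ forces vanishing, and where for $n_2=0$ the congruence $m\widehat q^{\,2}\equiv m'\widehat{q'}^{\,2}\bmod p^{\mu}$ drops out). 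At the finer scale there is no Gauss-sum evaluation at all: the three delta conditions form a system of congruences in the residual variables $b_2,h_2,\alpha_2$ which, after elimination, reduces to a degree-$5$ polynomial congruence in $u=n_2b_2^2$ modulo $p^{\mu}$. This gives $O(1)$ admissible values of $u$, hence $O(p^{\min\{\ell,\mu\}})$ admissible $b_2$; that solution-counting, not Weil, is the true source of the $\min\{\ell,\mu\}$ exponent in part~(3).
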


\section{The zero frequency}
\subsection{$\mathbf{t^{\varepsilon}\ll\frac{NX}{p^\lambda RQ}\ll t^{1-\varepsilon}}$}
\ \ \\

Denote the contribution of this part to $\Omega$ by $\Omega_{0}^1$.
 By~\eqref{Omega1},~\eqref{T1} and Lemma \ref{integral-lemma1} and Lemma \ref{C*{n2}} we get
\begin{align*}
\Omega_{0}^1\ll&
\sum\limits_{\delta=0,1}\sum\limits_{q\sim R\atop n_1'|q, (q,p)=1}\frac{1}{q^2}
\underset{m\equiv m'\bmod p^{(\lambda-\delta)/2}\atop |m-m'|\ll \frac{N^2X^2M}{Q^2R^2p^{2\lambda}t^2}+\frac{MQRp^{\lambda}}{NX}}
{\sum_{m\sim M}\sum_{m'\sim M}}\underset{m\equiv ap^{k-\lambda}\bmod q\atop m\equiv a'p^{k-\lambda}\bmod q}{\;\sideset{}{^\star}\sum_{a\bmod{q}}
\;\sideset{}{^\star}\sum_{a'\bmod{q}}}
\frac{N_1}{n_1'^2}(\widehat{q},m-m')p^{2\lambda}\frac{1}{t^{1-\varepsilon}}\\
\ll
&\frac{1}{n_1'^3}\left(\frac{NX^3R}{n_1'Q^3}p^{k+2\lambda}+
\frac{N^2X^5p^{1/2}}{tRQ^5}p^{2k-\lambda/2}
+\frac{tR^2X^2p^{1/2}}{NQ^2}p^{2k+5\lambda/2}\right).
\end{align*}
Here we have used $N_1\asymp\frac{N^2X^3}{Q^3}$ and $M \ll\frac{tRp^{k}}{N}$.
The contribution of this part to $D(N, X, R,M)$ is
\begin{align*}
&\ll\frac{NX^{1/2}}{Q^{1/2}p^{k/2+\lambda}}
\bigg(\frac{NX^3R}{Q^3}p^{k+2\lambda}
+\frac{N^2X^5p^{1/2}}{tRQ^5}p^{2k-\lambda/2}
+\frac{tR^2X^2p^{1/2}}{NQ^2}p^{2k+\frac{5}{2}\lambda}\bigg)^{1/2}\\
&\ll\frac{N^{3/2}X^2R^{1/2}}{Q^{2}}+
\frac{N^2X^{3}}{Q^3R^{1/2}t^{1/2}}p^{\frac{1}{4}+\frac{k}{2}-\frac{5\lambda}{4}}
+\frac{N^{1/2}X^{3/2}Rt^{1/2}}{Q^{3/2}}p^{\frac{1}{4}+\frac{k}{2}+\frac{\lambda}{4}}.
\end{align*}

\subsection{$\mathbf{\frac{NX}{p^\lambda RQ}\gg t^{1-\varepsilon}}$}
\ \ \\

Denote the contribution of this part to $\Omega$ by $\Omega_{0}^2$.
 By~\eqref{Omega1},~\eqref{T1} and Lemmas  \ref{integral-lemma2}--\ref{C*{n2}} we get

\begin{align*}
\Omega_{0}^2\ll&
\sum\limits_{\delta=0,1}\sum\limits_{q\sim R\atop n_1'|q, (q,p)=1}\frac{1}{q^2}
\underset{m\equiv m'\bmod p^{(\lambda-\delta)/2}}
{\sum_{m\sim M}\sum_{m'\sim M}}\;\underset{m\equiv ap^{k-\lambda}\bmod q\atop m\equiv a'p^{k-\lambda}\bmod q}{\sideset{}{^\star}\sum_{a\bmod{q}}
\;\sideset{}{^\star}\sum_{a'\bmod{q}}}
\frac{N_1}{n_1'^2}(\widehat{q},m-m')p^{2\lambda}\frac{p^\lambda RQ}{N^{1-\varepsilon}X}\\
\ll&\frac{1}{n_1'^3}\left(\frac{NX^3R}{Q^3}p^{k+2\lambda}
+\frac{NX^4p^{1/2}}{Q^4}p^{2k+\lambda/2}\right).
\end{align*}
Here we have used $N_1\asymp\frac{N^2X^3}{Q^3}$ and $M \ll\frac{Xp^{k-\lambda}}{Q}$. Similar to what was said before, the contribution of this part to $D(N, X, R,M)$ is
\begin{align*}
\ll&\frac{NX^{1/2}}{Q^{1/2}p^{k/2+\lambda}}
\left(\frac{NX^3R}{Q^3}p^{k+2\lambda}
+\frac{NX^4p^{1/2}}{Q^4}p^{2k+\lambda/2}\right)^{1/2}\\
\ll&\frac{N^{3/2}X^2R^{1/2}}{Q^{2}}
+\frac{N^{3/2}X^{5/2}}{Q^{5/2}}p^{k/2-3\lambda/4+1/4}.
\end{align*}

\section{The non-zero frequencies}
\subsection{$\mathbf{t^{\varepsilon}\ll\frac{NX}{p^\lambda RQ}\ll t^{1-\varepsilon}}$}
\ \ \\

Similarly with the case of the zero frequency, denote the contribution of this case of the non-zero frequencies to $\Omega$ by
$\Omega_{\neq0}^1$. By~\eqref{Omega1},~\eqref{T1}, Lemma \ref{integral-lemma1} and Lemma \ref{C*{n2}} we have
\begin{align*}
\Omega_{\neq0}^1\ll&
\sum\limits_{\delta=0,1}
\sum_{q\sim R \atop n_1'|q, (q,p)=1}\sum_{q'\sim R \atop n_1'|q', (q',p)=1}
\frac{1}{qq'}
\sum_{m\sim M\atop(m,q)=1}\sum_{m'\sim M\atop(m',q')=1}
\underset{m\equiv ap^{k-\lambda}\bmod q
\atop m\equiv a'p^{k-\lambda}\bmod q'}
{\;\sideset{}{^\star}\sum_{a\bmod{q}}
\;\sideset{}{^\star}\sum_{a'\bmod{q'}}}\\
&\cdot\sum\limits_{0\leq\ell\leq\log q}
\frac{N_1p^{5\lambda/2+\min\{\ell,\alpha\}+3\delta/2}}
{\widehat{p}n_1'^2n_1''^2p^{\ell}}
\left(\frac{N_3'}{t}+\frac{Q^{3/2}Rp^{\lambda/2}n_1''^{1/2}}
 {tNX^{3/2}}
 \sum_{N_3'\ll n_2\ll N_3}
 \frac{(\widehat{q},\widehat{q'},n_2)}{n_2^{1/2}}\right)\\
\ll&\frac{tR^4}
{N^2n_1'^4}p^{2k+5\lambda/2+3/2}
+\frac{RX^3N}{tQ^3n_1'^4}
p^{2k-\lambda/2+3/2}
+\frac{tR^{7/2}X^{1/2}}{N^{3/2}Q^{1/2}n_1'^4}p^{2k+2\lambda+3/2}.
\end{align*}
By $N_1\asymp\frac{N^2X^3}{Q^3}$ and $M \ll\frac{tRp^{k}}{N}$. Then the contribution of this part to $D(N, X, R,M)$ is
\begin{align*}
\ll&\frac{NX^{1/2}}{Q^{1/2}p^{k/2+\lambda}}
\bigg(\frac{tR^4}
{N^2}p^{2k+5\lambda/2+3/2}
+\frac{RX^3N}{tQ^3}p^{2k-\lambda/2+3/2}
+\frac{tR^{7/2}X^{1/2}}{N^{3/2}Q^{1/2}n_1'^4}
p^{2k+2\lambda+3/2}\bigg)^{1/2}\\
\ll&\frac{t^{1/2}X^{1/2}R^2}{Q^{1/2}}p^{k/2+\lambda/4+3/4}
+\frac{N^{3/2}X^2R^{1/2}}{Q^{2}t^{1/2}}p^{k/2-5\lambda/4+3/4}
+\frac{N^{1/4}X^{3/4}R^{7/4}}{Q^{3/4}}t^{1/2}p^{k/2+3/4}.
\end{align*}

\subsection{$\mathbf{\frac{NX}{p^\lambda RQ}\gg t^{1-\varepsilon}}$}
\ \\

Denote the contribution of this part to $\Omega$ by $\Omega_{\neq0}^2$.
 By~\eqref{Omega1},~\eqref{T1} and Lemmas \ref{integral-lemma2}--\ref{C*{n2}} we get

\begin{align*}
\Omega_{\neq0}^2\ll&
\sum\limits_{\delta=0,1}
\sum_{q\sim R \atop n_1'|q, (q,p)=1}\sum_{q'\sim R \atop n_1'|q', (q',p)=1}
\frac{1}{qq'}
\sum_{m\sim M\atop(m,q)=1}\sum_{m'\sim M\atop(m',q')=1}\\
&\;\;\;\;\;\;\;\;\;\;\;\;\;\cdot\underset{m\equiv ap^{k-\lambda}\bmod q
\atop m\equiv a'p^{k-\lambda}\bmod q'}
{\;\sideset{}{^\star}\sum_{a\bmod{q}}
\;\sideset{}{^\star}\sum_{a'\bmod{q'}}}
\sum\limits_{0\leq\ell\leq\log q}
\frac{N_1p^{5\lambda/2+3/2\delta}}
{\widehat{p}n_1'^2n_1''^2}\cdot\frac{p^{\lambda}qQ}{NXp^{\ell}}N_3\\
\ll&\frac{R^2X^2}{Q^2n_1'^4n_1''^2}
p^{2k+\lambda/2+3\delta/2}.
\end{align*}
By $N_1\asymp\frac{N^2X^3}{Q^3}$ and $M \ll\frac{Xp^{k-\lambda}}{Q}$. Then the contribution of this part to $D(N, X, R,M)$ is
\begin{align*}
\ll& \frac{NX^{1/2}}{Q^{1/2}p^{k/2}+\lambda}
\left(\frac{R^2X^2}{Q^2}
p^{2k+\lambda/2+3\delta/2}\right)^{1/2}\\
\ll&\frac{NX^{3/2}R}{Q^{3/2}}p^{k/2-3\lambda/4+3/4}.
\end{align*}

\section{The non-oscillating case}\label{non-oscillating}

Now we assume that $\frac{NX}{p^{\lambda}RQ}\ll t^{\varepsilon}$ and  $\frac{n_1^2n_2N}{q^3p^{3\lambda}}\ll t^{\varepsilon}$.
In this section we allow abuse of some notations without causing ambiguity.
 Rewrite $\mathfrak{J}(m,q,x)$ as
$$
\int_{\mathbb{R}}V(y)e\left(-\frac{Nxy}{Qqp^\lambda}\right)
e\left(-\frac{t\log y}{2\pi}-\frac{mNy}{qp^k} \right)\mathrm{d}y.
$$
Let
$$
h(y)=-\frac{t\log y}{2\pi}-\frac{mNy}{qp^k}.
$$
Then we have
$$
h'(y)=-\frac{t}{2\pi y}-\frac{mN}{qp^k},\quad h''(y)=\frac{t}{2\pi y^2},
\quad h^{(j)}(y) \asymp_{j} t, \quad j\geq 2.
$$
Hence, by Lemma \ref{lemma:exponentialintegral} the integral is negligible small
unless $\frac{mN}{qp^k}\asymp t$, in which case we have the stationary
phase point $y_0=\frac{-qp^kt}{2\pi mN}$ and
$$
\mathfrak{J}(m,q,x)=\frac{1}{\sqrt{t}}
e\left(-\frac{t}{2\pi}\log\frac{-qp^kt}{2\pi emN}\right)
V_{x}\left(\frac{-qp^kt}{2\pi mN}\right)+O\left(t^{-A}\right),
$$
where $V_{x}$ is a $t^{\varepsilon}$-inert function.
Together with \eqref{s0}, we have $D(N,X,R)$ is equal to up to a negligibly
small error term
\begin{align*}
&\frac{N^{1-it}\chi(q)\tau_{\chi}}{Qp^{k}}\sum_{q\sim R}\;\sideset{}{^\star}\sum_{a\bmod qp^{\lambda}}
\sum_{\eta=\pm1}\sum_{n_1|qp^{\lambda}}\sum_{n_2}
\frac{A(n_2,n_1)}{n_1n_2}
\int_{\mathbb{R}}g(q,x)U\left(\frac{\pm x}{X}\right)\notag \\
&\cdot\sum_{m\equiv ap^{k-\lambda}\bmod q}\overline{\chi}
\left(m-ap^{k-\lambda}\right)
S\left(\overline{a},\eta n_2;\frac{qp^{\lambda}}{n_1}\right)
\left(\frac{n_1^2 n_2N}{q^3p^{3\lambda}}\right)^{1/2}\notag\\
&\cdot
\frac{1}{\sqrt{t}}
e\left(-\frac{t}{2\pi}\log\frac{-qp^kt}{2\pi emN}\right)
\Phi_{x}^{\mathrm{sgn}(\eta)}
\left(\frac{n_1^2 n_2}{q^3p^{3\lambda}}\right)
V_{x}\left(\frac{-qp^kt}{2\pi mN}\right)
\mathrm{d}x,
\end{align*}
where $\Phi_{x}^{\mathrm{sgn}(\eta)}$ is a $t^{\varepsilon}$-inert function.
Rearranging the sums, inserting a dyadic partition for $n_2$-sum and
estimating integral trivially, the above is bounded by
$$N^{\varepsilon}\sup_{1\ll N_0\ll
\frac{q^3p^{3\lambda}}{N}t^{\varepsilon}}
\sup_{x\asymp X}|\mathcal{D}(N, X, R,M)|,$$
where
\begin{align}
\mathcal{D}(N, X, R,M)=&\frac{N^{3/2}X}{t^{1/2}Qp^{k/2+3\lambda/2}}
\sum_{\eta=\pm1}\sum_{q\sim R\atop(q,p)=1}\frac{1}{q^{3/2}}
\sum_{n_1|qp^{\lambda}}
\sum_{n_2\asymp\frac{N_0}{n_1^2}}
\frac{A(n_2,n_1)}{n_2^{1/2}}\notag \\
&\cdot\sum_{m\asymp \frac{tRp^k}{N}} \;\sideset{}{^\star}\sum_{a\bmod{qp^\lambda}\atop m\equiv ap^{k-\lambda}\bmod q}\overline{\chi}
\left(m-ap^{k-\lambda}\right)
S\left(\overline{a},\eta n_2;\frac{qp^{\lambda}}{n_1}\right)\notag\\
&\cdot \Phi_{x}^{\mathrm{sgn}(\eta)}
\left(\frac{n_1^2 n_2}{q^3p^{3\lambda}}\right)
V_{x}\left(\frac{-qp^kt}{2\pi mN}\right),
\end{align}
Now we use the Cauchy-Schwartz inequality
and \eqref{theta3} with $n_1=n_1'n_1''$, $n_1'|q, \; n_1''|p^\lambda$ to get
$$
\mathcal{D}(N, X, R,M)\ll\frac{N^{3/2}X}{t^{1/2}Qp^{k/2+3\lambda/2}}
\sum\limits_{\eta=\pm1}
\sum\limits_{(n_{1}',p)=1}\sum\limits_{n_{1}''|p^{\lambda}}
(n_{1}'n_{1}'')^{\theta_3}\;\Omega^{1/2},
$$
where
\begin{align*}
\Omega=\sum\limits_{n_2}W\left(\frac{n_{1}'^2n_{1}''^2n_2}{N_0}\right)&
\bigg|\sum_{q\sim R \atop n_1'|q, (q,p)=1}\frac{1}{q^{3/2}}\sum_{m\asymp \frac{tRp^k}{N}\atop (m,q)=1}\;\sideset{}{^\star}\sum_{a\bmod{q}\atop m\equiv ap^{k-\lambda}\bmod q}\\ &\mathfrak{C}(m,a,q,n_1',n_1'',n_2)\Phi_{x}^{\mathrm{sgn}(\eta)}
\left(\frac{n_{1}'^2n_{1}''^2n_2}{q^3p^{3\lambda}}\right)
V_{x}\left(\frac{-qp^kt}{2\pi mN}\right)\bigg|^2,
\end{align*}
with
$\mathfrak{C}(m,a,q,n_1',n_1'',n_2)$ is defined in \eqref{character}.
Opening the square we get
\begin{align}\label{Omega}
\Omega\ll
\sum_{q\sim R \atop n_1'|q, (q,p)=1}
\sum_{q'\sim R \atop n_1'|q', (q',p)=1}\frac{1}{(qq')^{3/2}}
\sum_{m\asymp \frac{tRp^k}{N}\atop (m,q)=1}
\sum_{m'\asymp \frac{tRp^k}{N}\atop (m',q')=1}
\underset{m\equiv ap^{k-\lambda}\bmod q \atop m'\equiv a'p^{k-\lambda}\bmod q'}{\;\sideset{}{^\star}\sum_{a\bmod{q}}
\;\sideset{}{^\star}\sum_{a'\bmod{q'}}}\mathcal{T},
\end{align}
where
\begin{align*}
\mathcal{T}=\sum\limits_{n_2\geq1}\Phi\left(\frac{n_{1}'^2n_{1}''^2n_2}{N_0}\right)
\mathfrak{C}(m,a,q,n_1',n_1'',n_2)
\overline{\mathfrak{C}(m',a',q',n_1',n_1'',n_2)},
\end{align*}
with
$\Phi\left(\frac{n_{1}'^2n_{1}''^2n_2}{N_0}\right)$ is a smooth
compactly supported function which contains the weight function
$\Phi_{x}^{\mathrm{sgn}(\eta)}
\left(\frac{n_{1}'^2n_{1}''^2n_2}{q^3p^{3\lambda}}\right)
\overline{\Phi_{x}^{\mathrm{sgn}(\eta)}
\left(\frac{n_{1}'^2n_{1}''^2n_2}{q^3p^{3\lambda}}\right)}$.

Note that in the (3) of Lemma~\ref{INT}, by taking $\sigma=1/2$ and
making a change of variable, we can get
$$
\Psi_{x}^{\pm}(z)=(zN)^{1/2}\int_{\mathbb{R}}(\pi^3zN)^{-i\tau}
\gamma^{\pm}_{3}(1/2+i\tau)\int_0^{\infty}W(y)
e\left(\frac{xNy}{p^\lambda qQ}\right)y^{-1/2-i\tau}\mathrm{d}y\mathrm{d}\tau.
$$
By repeated integration by parts for the $y$-integral, we can truncate $\tau$
at $\tau\ll t^{\varepsilon}$ with a negligibly small error term and get
$$
\Psi_{x}^{\pm}(z)=(zN)^{1/2}\Phi_{x}^{\pm}(z)+O(t^{-A}),
$$
where
$$
\Phi_{x}^{\pm}(z)=\frac{1}{2\pi^{5/2}}\int_{|\tau|\leq t^{\varepsilon}}
(\pi^3zN)^{-i\tau}\gamma^{\pm}_{3}(1/2+i\tau)\int_0^{\infty}W(y)
e\left(\frac{xNy}{p^\lambda qQ}\right)y^{-1/2-i\tau}\mathrm{d}y\mathrm{d}\tau.
$$
The function $\Phi_{x}^{\pm}(z)$ satisfies
$$
\frac{\partial^{j}}{\partial z^j}
\Phi_{x}^{\pm}(z)
\ll_{j} t^{\varepsilon}z^{-j},
$$
then we derive that
$$
\frac{\partial^{j}}{\partial n_2^j}
\Phi\left(\frac{n_{1}'^2n_{1}''^2n_2}{N_0}\right)
\ll_{j} t^{\varepsilon}n_2^{-j},\ \ \ \ j\geq0.
$$
By the Poisson summation formula modulo
$\widehat{q}\widehat{q'}\widehat{p}$, we arrive at
\begin{align}\label{T}
\mathcal{T}=\frac{N_0}{\widehat{q}\widehat{q'}\widehat{p}n_1'^2n_1''^2}
\sum\limits_{n_2\in \mathbb{Z}}\left|\mathfrak{C}^*(n_2)\right|\cdot
\left|\mathcal{H}(n_2)\right|,
\end{align}
where $\mathfrak{C}^*(n_2)$ is defined as in \eqref{Cn2} and
$$
\mathcal{H}(n_2)=\int_{\mathbb{R}}\Phi(u)
e\left(-\frac{n_2N_0u}{qq'p^{\lambda}n_1''}\right)\mathrm{d}u.
$$
We can get an upper bound of $\mathcal{H}(n_2)$ by repeated integration by parts,
that is
\begin{align}\label{H(n2)}
\mathcal{H}(n_2)\ll\left\{
         \begin{array}{ll}
           t^{-A}, & {if\ n_2\gg\frac{R^2p^{\lambda}n_1''}{N_0}t^{\varepsilon};} \\
          \\
           t^{\varepsilon}, & {if\ n_2\ll\frac{R^2p^{\lambda}n_1''}{N_0}t^{\varepsilon}.}
         \end{array}
       \right.
\end{align}

\subsection{The zero frequency}
Denote the contribution of this part to $\Omega$ by $\Omega_0$.
By \eqref{Omega}, \eqref{H(n2)} and Lemma~\ref{C*{n2}} we get
\begin{align*}
\Omega_0\ll&\sum\limits_{\delta=0,1}
\sum\limits_{q\sim R\atop n_1'|q, (q,p)=1}\frac{1}{q^3}
\underset{m\equiv m' \bmod p^{(\lambda-\delta)/2}}{\sum\limits_{m\asymp \frac{tRp^k}{N}\atop (m,q)=1}\sum\limits_{m'\asymp \frac{tRp^k}{N}\atop (m',q')=1}}
\underset{m\equiv ap^{k-\lambda}\bmod q \atop m'\equiv a'p^{k-\lambda}\bmod q'}{\;\sideset{}{^\star}\sum_{a\bmod{q}}
\;\sideset{}{^\star}\sum_{a'\bmod{q'}}}
\frac{N_0t^{\varepsilon}}{n_1'^2}(\widehat{q},m-m')p^{2\lambda}\\
\ll&\frac{1}{n_1'^3}\left(\frac{tR^3}{N^2n_1'}p^{k+5\lambda}+
\frac{t^2R^3}{N^3}p^{2k+9\lambda/2+1/2}
\right).
\end{align*}
The contribution of this part to $\mathcal{D}(N, X, R,M)$ is
$$
\ll \frac{N^{\frac{1}{2}}XR^{3/2}}{Q}p^{\lambda}
+\frac{t^{\frac{1}{2}}XR^{3/2}}{Q}p^{\frac{k}{2}+\frac{3}{4}\lambda+\frac{1}{4}}.
$$

\subsection{The non-zero frequencies}
Denote the contribution of this case of the non-zero frequencies to $\Omega$
by $\Omega_{\neq0}$,
\begin{align*}
\Omega_{\neq0}\ll&
\sum_{\delta=0,1}\sum\limits_{q\sim R\atop n_1'|q, (q,p)=1}
\sum\limits_{q'\sim R\atop n_1'|q', (q',p)=1}\frac{1}{(qq')^{3/2}}
\sum\limits_{m\asymp \frac{tRp^k}{N}\atop (m,q)=1}\sum\limits_{m'\asymp \frac{tRp^k}{N}\atop (m',q')=1}
\underset{m\equiv ap^{k-\lambda}\bmod q \atop m'\equiv a'p^{k-\lambda}\bmod q'}{\;\sideset{}{^\star}\sum_{a\bmod{q}}
\;\sideset{}{^\star}\sum_{a'\bmod{q'}}}\\
&\cdot\sum_{n_2\ll\frac{R^2p^{\lambda}n_1''}{N_0}t^{\varepsilon}}
\sum_{0\leq\ell\leq\log q}
\frac{N_0t^{\varepsilon}}{\widehat{p}n_1'^2p^{\ell}}
(\widehat{q},\widehat{q'},n_2)
p^{5\lambda/2+\min\{\ell,\alpha\}+3\delta/2}\\
\ll&\frac{t^{2+\varepsilon}R^3}{N^2n_1'^4}p^{2k+5\lambda/2+3/2}.
\end{align*}
Then the contribution of this part to $\mathcal{D}(N, X, R,M)$ is
\begin{align*}
\ll& \frac{N^{1/2}XR^{3/2}}{Q}t^{1/2}p^{k/2-\lambda/4+3/4}\\
\ll& \frac{R^{5/2}t^{1/2}}{N^{1/2}}p^{k/2+3\lambda/4+3/4}.
\end{align*}
Here we used the condition $X\ll\frac{p^{\lambda}RQ}{N}t^{\varepsilon}$.
So we have
$$
\mathcal{D}(N, X, R,M)\ll \frac{N^{\frac{1}{2}}XR^{3/2}}{Q}p^{\lambda}
+\frac{t^{\frac{1}{2}}XR^{3/2}}{Q}p^{\frac{k}{2}+\frac{3}{4}\lambda+\frac{1}{4}}
+ \frac{R^{5/2}t^{1/2}}{N^{1/2}}p^{k/2+3\lambda/4+3/4}.
$$
\section{Conclusion}

Now we ready to give a proof of proposition~\ref{upper bound}.
Recall that in Aggarwal \cite{Agg2} and Sun-Zhao \cite{SZ},
the authors took $Q$ to be $N^{1/2}/t^{1/5}$
and $N^{1/2}/p^{\lambda/2}$, respectively. This motivates us
to choose $Q=N^{1/2}/(p^{\lambda/2}t^{1/5})$. As we will see, after balancing finally,
which coincides with Aggarwal \cite{Agg2} and Sun-Zhao \cite{SZ}.
Firstly, by taking $Q=N^{1/2}/(p^{\lambda/2}T)$,
we get
\begin{align*}
S(N)\ll\;&N^{3/4}T^{3/4}p^{3\lambda/4}
+N^{1/2}T^{3/2}t^{-1/2}p^{k/2+\lambda/4+1/4}+N^{1/4}T^{1/4}t^{1/2}p^{k/2+\lambda/2+1/4}\\
&+N^{3/4}T^{3/4}p^{3\lambda/4+1/4}
+N^{1/4}T^{5/4}p^{k/2+\lambda/2+1/4}
+t^{1/2}N^{3/4}T^{-3/4}p^{k/2-\lambda/2+3/4}\\
&+t^{-1/2}N^{3/4}T^{3/4}p^{k/2-\lambda/2+3/4}
+t^{1/2}N^{3/4}T^{-1/2}p^{k/2-\lambda/2+3/4}+N^{3/4}T^{1/4}p^{k/2-\lambda/2+3/4}.
\end{align*}
Here we used $R\ll Q$ and $X\ll N^{\varepsilon}$.
Then we take $\lambda=\lfloor2k/5\rfloor+1$ and $T=t^{2/5}$.
We conclude that
$$
S(N)\ll p^{3/4}N^{1/2+\varepsilon}(\mathfrak{q}t)^{3/4-3/40},
$$
provided that $N\ll(\mathfrak{q}t)^{3/2+\varepsilon}$.

\section{Character sums}\label{10}
In this section we estimate the character sums in \eqref{Cn2}
\begin{align*}
\mathfrak{C}^*(n_2)=\sum\limits_{\beta\bmod\widehat{q}\widehat{q'}\widehat{p}}
\mathfrak{C}(m,a,q,n_1',n_1'',\beta)
\overline{\mathfrak{C}(m',a',q',n_1',n_1'',\beta)}
e\left(\frac{\eta n_2\beta}{\widehat{q}\widehat{q'}\widehat{p}}\right).
\end{align*}

Write $\beta=\widehat{q}\widehat{q'}
\overline{\widehat{q}\widehat{q'}}b_{1}+\widehat{p}
\overline{\widehat{p}}b_{2}$, with $b_1\bmod\widehat{p}$,
$b_2\bmod \widehat{q}\widehat{q'}$.
We obtain
$$
\mathfrak{C}^*(n_2)=\mathfrak{C}_1^*\mathfrak{C}_2^*,
$$
where
\begin{align*}
\mathfrak{C}_1^*=\sum_{b\bmod \widehat{q}\widehat{q'}}
   S\left(a\overline{\widehat{p}},b\overline{\widehat{p}};\widehat{q}\right)
   S\left(a'\overline{\widehat{p}},b\overline{\widehat{p}};\widehat{q'}\right)
   e\left(\frac{n_2\overline{\widehat{p}}b}{\widehat{q}\widehat{q'}}\right),
\end{align*}
and
\begin{align*}
\mathfrak{C}_2^*=&\sum_{b\bmod\widehat{p}}\
\sideset{}{^\star}\sum_{c_1\bmod p^{\lambda}}
   \overline{\chi}\left(m-c_1p^{k-\lambda}\right)
   S\left(\overline{c_1}\overline{\widehat{q}},
 b\overline{\widehat{q}};\widehat{p}\right)\\
& \cdot\sideset{}{^\star}\sum_{c_2\bmod p^{\lambda}}
   \chi\left(m'-c_2p^{k-\lambda}\right)
   S\left(\overline{c_2}\overline{\widehat{q'}},b
   \overline{\widehat{q'}};\widehat{p}\right)
   e\left(\frac{\overline{\widehat{q}\widehat{q'}}bn_2}{\widehat{p}}\right).
\end{align*}
The following estimate for the character sum $\mathfrak{C}_1^*$ was proved in \cite{Mun3}.
\begin{lemma}
We have
\begin{align*}
\mathfrak{C}_1^* \ll \widehat{q}\widehat{q'}(\widehat{q},\widehat{q'},n_2).
\end{align*}
Moreover, for $n_2=0$, the character sums vanish unless $q=q'$ in which case
$$
\mathfrak{C}_1^*\ll \widehat{q}^2(\widehat{q},m-m').
$$
\end{lemma}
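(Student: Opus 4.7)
The plan is to open both Kloosterman sums as additive character sums over units and execute the $b$-summation first to produce a congruence constraint. Writing $S(\alpha,\beta;c) = \sum_{x\in(\mathbb{Z}/c)^\times} e((\alpha x + \beta\overline{x})/c)$, and using that $(\widehat{p},\widehat{q}\widehat{q'})=1$ (since $\widehat{p}\mid p^\lambda$ and $(q,p)=(q',p)=1$), the inner $b$-sum becomes
\begin{align*}
\sum_{b\bmod\widehat{q}\widehat{q'}} e\!\left(\frac{\overline{\widehat{p}}\,b\,\bigl(\widehat{q'}\,\overline{x_1}+\widehat{q}\,\overline{x_2}+n_2\bigr)}{\widehat{q}\widehat{q'}}\right) = \widehat{q}\widehat{q'}\cdot\mathbf{1}\bigl[\widehat{q'}\,\overline{x_1}+\widehat{q}\,\overline{x_2}+n_2\equiv 0 \pmod{\widehat{q}\widehat{q'}}\bigr].
\end{align*}
Hence $\mathfrak{C}_1^*$ equals $\widehat{q}\widehat{q'}$ times an exponential sum over pairs $(x_1,x_2)\in(\mathbb{Z}/\widehat{q})^\times\times(\mathbb{Z}/\widehat{q'})^\times$ subject to this congruence, which I would then estimate trivially in absolute value.

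For the first bound I would count solutions of the congruence. Set $d=(\widehat{q},\widehat{q'})$ and write $\widehat{q}=dA$, $\widehat{q'}=dB$ with $(A,B)=1$. Reducing modulo $d$ shows the solution set is empty unless $d\mid n_2$, so that $(\widehat{q},\widehat{q'},n_2)=d$ in the non-vanishing case. Assuming $d\mid n_2$ and dividing through by $d$, reduction modulo $A$ pins $\overline{x_1}$ uniquely modulo $A$ (as $B$ is invertible there), and symmetrically $\overline{x_2}$ modulo $B$. The residual freedom for $\overline{x_1}$ modulo $\widehat{q}=dA$ is a factor of $d$ and likewise for $\overline{x_2}$ modulo $\widehat{q'}$; the remaining linear constraint modulo $d$ then couples these choices so that the joint solution count is $O(d)=O((\widehat{q},\widehat{q'},n_2))$. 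Multiplying by the $\widehat{q}\widehat{q'}$ from the $b$-sum gives the claimed estimate.

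For the second assertion, the $n_2=0$ congruence reads $\widehat{q'}\,\overline{x_1}+\widehat{q}\,\overline{x_2}\equiv 0\pmod{\widehat{q}\widehat{q'}}$. Reduction modulo $\widehat{q}$ forces $\widehat{q'}\,\overline{x_1}\equiv 0\pmod{\widehat{q}}$; since $\overline{x_1}$ is a unit, this requires $\widehat{q}\mid\widehat{q'}$, and by symmetry $\widehat{q'}\mid\widehat{q}$, so $\widehat{q}=\widehat{q'}$ and $q=q'$. In that case the condition collapses to $\overline{x_2}\equiv -\overline{x_1}\pmod{\widehat{q}}$ and a short computation yields $\mathfrak{C}_1^* = \widehat{q}^2\,c_{\widehat{q}}\bigl(\overline{\widehat{p}}(a-a')\bigr)$, where $c_{\widehat{q}}$ is the Ramanujan sum. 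Combining with the relation $a-a'\equiv (m-m')\overline{p^{k-\lambda}}\pmod{\widehat{q}}$, read off from $m\equiv ap^{k-\lambda}$ and $m'\equiv a'p^{k-\lambda}\pmod{q}$, and the standard bound $|c_{\widehat{q}}(r)|\leq(\widehat{q},r)$, delivers $\mathfrak{C}_1^*\ll\widehat{q}^2(\widehat{q},m-m')$.

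The principal bookkeeping nuisance is the counting when $d=(\widehat{q},\widehat{q'})>1$: the mod-$\widehat{q}$ and mod-$\widehat{q'}$ projections of the congruence share information modulo $d$, so one must verify that after stripping off the independent mod-$A$ and mod-$B$ data the residual coupling amounts to a single linear condition modulo $d$ rather than two. This is what produces the sharp $(\widehat{q},\widehat{q'},n_2)$ factor instead of a wasteful $d^2$; everything else in the argument is mechanical CRT and opening of Kloosterman sums.
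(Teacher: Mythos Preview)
Your argument is correct. The paper does not actually prove this lemma at all: it merely states the result and cites Munshi \cite{Mun3} for the proof. What you have written is therefore not a different route so much as a route where the paper offers none. Your method---opening both Kloosterman sums, summing over $b$ to force the congruence $\widehat{q'}\,\overline{x_1}+\widehat{q}\,\overline{x_2}+n_2\equiv 0\pmod{\widehat{q}\widehat{q'}}$, and then counting solutions---is in fact precisely how Munshi treats the analogous sum in the cited reference, so you have essentially reproduced the intended argument.

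One small remark on bookkeeping: in the general case you can streamline the counting by observing that once $y_1=\overline{x_1}$ is fixed, the congruence (after dividing by $d$) determines $y_2=\overline{x_2}$ uniquely modulo $dB=\widehat{q'}$ whenever it is solvable, and solvability pins $y_1$ modulo $A$; hence at most $d$ pairs, with no need to track the mod-$d$ coupling separately. Also note that if one keeps the original first arguments $\overline{a},\overline{a'}$ of the Kloosterman sums (the paper's $a,a'$ here appear to be a relabeling), the Ramanujan sum comes out as $c_{\widehat{q}}\bigl(\overline{\widehat{p}}(\overline{a}-\overline{a'})\bigr)$, but since $\overline{a}-\overline{a'}=\overline{aa'}(a'-a)$ with $aa'$ a unit, the gcd $(\widehat{q},\overline{a}-\overline{a'})=(\widehat{q},a-a')=(\widehat{q},m-m')$ is unaffected.
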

To estimate the character sum $\mathfrak{C}_2^*$, we use the
 strategy in \cite{Mun3} and \cite{SZ} to prove the following results.
\begin{lemma}
Assume $\lambda\leq 2k/3$. Let $\lambda=2\mu+\delta$
with $\delta=0$ or $1$,  $p^\ell| n_2$ with $\ell\geq 0$.
\begin{enumerate}
\item
For $n_1''\neq1$, we have
\begin{align*}
\mathfrak{C}_2^*=0.
\end{align*}
 \item
 For $n_2=0$, $\mathfrak{C}_2^*$ vanishes unless
$mq^2\equiv m'q'^2\bmod p^{\mu}$, in this case we have
\begin{align}\label{10.1}
\mathfrak{C}_2^*\ll p^{3\lambda}.
\end{align}
\item
For $n_2\neq 0$, we have
\begin{align}\label{10.2}
\mathfrak{C}_2^*\ll
p^{5\lambda/2+\min\{\ell,\mu\}+3\delta/2}.
\end{align}
\end{enumerate}
\end{lemma}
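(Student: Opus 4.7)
My approach is to open both inner Kloosterman sums of $\mathfrak C_2^*$ as exponential sums in auxiliary variables $d_1,d_2\bmod\widehat p$, execute the $b$-sum first to obtain a linear constraint linking $d_1$ and $d_2$, and then treat the three claims separately. Using the definition of Kloosterman sums and orthogonality in the $b$-variable gives
$$
\mathfrak C_2^*=\widehat p\sideset{}{^\star}\sum_{c_1,c_2\bmod p^\lambda}\overline\chi(m-c_1p^{k-\lambda})\chi(m'-c_2p^{k-\lambda})\sum_{d_1,d_2}^{\dagger}e\!\left(\frac{d_1\overline{c_1}\overline{\widehat q}+d_2\overline{c_2}\overline{\widehat{q'}}}{\widehat p}\right),
$$
where the dagger means that $d_1,d_2\bmod\widehat p$ range over units satisfying $\overline{d_1}\widehat{q'}+\overline{d_2}\widehat q\equiv -n_2\pmod{\widehat p}$.

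For part (1), write $n_1''=p^j$ with $j\geq 1$ so $\widehat p=p^{\lambda-j}$, and split $c_1=c_1'+c_1''\widehat p$ with $c_1'\bmod\widehat p$ and $c_1''\bmod p^j$. Both the exponential phase and the $d$-constraint depend on $c_1$ only through $c_1'$, so the entire dependence on $c_1''$ sits in $\overline\chi(m-c_1p^{k-\lambda})$. After the change of variable $y=c_1''\,\overline{m-c_1'p^{k-\lambda}}\bmod p^j$, the inner sum becomes $\sum_{y\bmod p^j}\overline\chi(1-yp^{k-j})$, i.e.\ the restriction of the primitive character $\overline\chi\bmod p^k$ to the subgroup $1+p^{k-j}\mathbb Z/p^k\mathbb Z$, which vanishes whenever $j\geq 1$. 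Hence $\mathfrak C_2^*=0$, and in the remaining parts we may assume $n_1''=1$ and $\widehat p=p^\lambda$.

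For parts (2) and (3), substituting $d_i\to c_id_i$ absorbs $\overline{c_i}$ into the phases and the $d$-constraint determines $\overline{d_2}$ in terms of $\overline{d_1},c_1,c_2,n_2$, leaving a triple sum over $c_1,c_2,d_1\bmod p^\lambda$ coprime to $p$. I would then invoke the Postnikov (p-adic logarithm) expansion
$$
\chi(1+p^{k-\lambda}u)=e\!\left(\frac{\alpha u+\beta p^{k-\lambda}u^2}{p^\lambda}\right)
$$
with $(\alpha,p)=1$ and $\beta\in\mathbb Z/p^{2\lambda-k}\mathbb Z$ (the quadratic term being trivial for $\lambda\leq k/2$); this expansion is valid precisely under the hypothesis $\lambda\leq 2k/3$, which rules out cubic and higher terms. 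For (2), the $d$-constraint collapses to $d_2\widehat q\equiv -d_1\widehat{q'}\pmod{p^\lambda}$, so the $d_1$-sum is a Ramanujan sum in $c_2\widehat{q'}^{\,2}-c_1\widehat q^{\,2}$; aggregating the two contributing valuations $v_p(c_2\widehat{q'}^{\,2}-c_1\widehat q^{\,2})\in\{\lambda-1,\lambda\}$ collapses the $c_2$-sum (using $\sum_{w\bmod p}\chi(z-wp^{k-1})=0$ by primitivity of $\chi$) to the single constraint $c_2\equiv c_1\widehat q^{\,2}\overline{\widehat{q'}^{\,2}}\pmod{p^\lambda}$ with an overall factor $p^\lambda$, and the remaining character sum in $c_1$ reduces via Postnikov to a quadratic Gauss sum whose non-vanishing forces $mq^2\equiv m'q'^2\pmod{p^\mu}$ and is bounded by $p^\lambda$; combining yields $p^{3\lambda}$. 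For (3), with $n_2=p^\ell n_2^\star$, $(n_2^\star,p)=1$, the constraint contributes a term linear in $n_2$ to the $d_1$-phase, and the $c_1,c_2,d_1$-sums become generalized quadratic Gauss sums of individual size $p^{\lambda/2+\delta/2}$, restricted to a coset of size $p^{\min\{\ell,\mu\}}$ that tracks the compatibility of $v_p(n_2)$ with the Postnikov threshold $\mu=\lfloor\lambda/2\rfloor$; multiplying by $\widehat p=p^\lambda$ yields $\mathfrak C_2^*\ll p^{5\lambda/2+\min\{\ell,\mu\}+3\delta/2}$.

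The main obstacle is handling the quadratic Postnikov term in the intermediate range $k/2<\lambda\leq 2k/3$, where it is genuinely present and must be propagated through the completion of squares in both the $c_1$- and $c_2$-sums. The resulting quadratic Gauss sum has a modulus sensitive to the parity of $\lambda$ (through $\delta$) and to $\ell=v_p(n_2)$; establishing the uniform bound, and in particular ensuring that the saving is never weaker than $p^{\min\{\ell,\mu\}}$ even when the stationary-phase conditions in $c_1$ and $c_2$ align, requires careful bookkeeping and is the most delicate step of the proof.
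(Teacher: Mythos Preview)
Your argument for part (1) is correct and in fact a little cleaner than the paper's: you split $c_1=c_1'+c_1''\widehat p$ and use primitivity of $\chi$ on the subgroup $1+p^{k-j}\mathbb Z/p^k\mathbb Z$ directly, whereas the paper splits every variable at the half-level $p^{\mu+\delta}$ and distinguishes the cases $\nu_2=0$ and $\nu_2\ge 1$.

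For parts (2) and (3) your route is genuinely different from the paper's. The paper does \emph{not} invoke a Postnikov expansion at level $\lambda$ (so no quadratic term ever appears). Instead it writes $c_i=b_i'+b_ip^{\mu+\delta}$ and $\alpha=\alpha_2+\alpha_1p^{\mu+\delta}$, so that only the linear identity $\chi(1+zp^{k-\mu})=e(\xi z/p^{\mu})$ with $(\xi,p)=1$ is needed. The inner sums over $b_1,h_1,\alpha_1$ then collapse to three congruences modulo $p^{\mu}$, and everything reduces to solution-counting: for (2) the congruences are inconsistent unless $m\widehat q^{\,2}\equiv m'\widehat{q'}^{\,2}\pmod{p^{\mu}}$; for (3) they combine into a single degree-$5$ polynomial equation in $u=n_2b_2^2$ modulo $p^{\mu}$, whose $O(1)$ roots give at most $O(p^{\ell})$ admissible $b_2\bmod p^{\mu}$ and hence the factor $p^{\min\{\ell,\mu\}}$.

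Your sketch has a structural gap in (3). After the Postnikov substitution the $c_i$-phases are indeed quadratic polynomials, but the Kloosterman contribution is $d_i\overline{c_i}$; your change of variables $d_i\to c_id_i$ removes $\overline{c_i}$ from the phase only to reinsert it through the constraint $\overline{c_1d_1}\widehat{q'}+\overline{c_2d_2}\widehat q\equiv -n_2$. Either way the $c_i$-sums carry a Laurent phase of the shape $Ac_i+Bc_i^2+C\overline{c_i}$, which is not a quadratic Gauss sum. Such sums are handled by $p$-adic stationary phase, and the standard device for that is precisely the half-level splitting the paper uses; carrying it out one is led back to the same congruence system and the same degree-$5$ count. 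In particular, the factor $p^{\min\{\ell,\mu\}}$ in (3) arises from root-counting for that polynomial, not from a ``coset restriction'' in a Gauss sum, and your outline does not supply a mechanism producing it.
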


\begin{proof}
Opening the Kloosterman sums and executing the sum over $b$, we arrive at
\begin{align}\label{openK}
\mathfrak{C}_2^*=&\,\widehat{p}
\sideset{}{^\star}\sum_{c_1\bmod  p^{\lambda}}
\;\sideset{}{^\star}\sum_{c_2\bmod  p^{\lambda}}
\overline{\chi}\left(m-c_1p^{k-\lambda}\right)
\chi\left(m'-c_2p^{k-\lambda}\right)\nonumber\\
 &\cdot\underset{\alpha \widehat{q}+\alpha'\widehat{q'}+\widehat{q}\widehat{q'}n_2\equiv0 \bmod \widehat{p}}
{\sideset{}{^\star}\sum_{\alpha\bmod\widehat{p}}
\;\sideset{}{^\star}\sum_{\alpha'\bmod\widehat{p}}}
e\left(\frac{\alpha\overline{c_1\widehat{q}}+\alpha'\overline{c_2\widehat{q'}}}
{\widehat{p}}\right).
\end{align}
For $n_2 \equiv 0 \bmod \widehat{p}$, we get $\alpha' \equiv -\alpha\widehat{q}\overline{\widehat{q'}} \bmod \widehat{p}$ and
it follows that
\begin{align*}
\mathfrak{C}_2^*=&\,\widehat{p}^2
\sideset{}{^\star}\sum_{c\bmod p^{\lambda}}
\overline{\chi}\left(m-cp^{k-\lambda}\right)
\chi\left(m'-c(\widehat{q}\overline{\widehat{q'}})^2p^{k-\lambda}\right)\nonumber\\
 &-\widehat{p}
\sideset{}{^\star}\sum_{c_1\bmod p^{\lambda}}
\;\sideset{}{^\star}\sum_{c_2\bmod p^{\lambda}}
\overline{\chi}\left(m-c_1p^{k-\lambda}\right)
\chi\left(m'-c_2p^{k-\lambda}\right).
\end{align*}
The last double sum is clearly bounded by $O(\widehat{p}p^{2\lambda})$. The other sum has no cancelation if
$m\widehat{q}^2 \equiv m'\widehat{q'}^2 \bmod p^\lambda$
and we get
\begin{align}\label{10.4}
\mathfrak{C}_2^*\ll \widehat{p}p^{2\lambda}.
\end{align}

Write
$p^{\lambda}=p^{2\nu_1+\delta_1}$, $\widehat{p}=p^{\lambda}/n_1''=p^{2\nu_2+\delta_2}$,
$\delta_1=0$ or 1, $\delta_2=0 \text{\ or\ } 1$,  $\nu_1\geq 1$.
Write $c_1=b_1p^{\nu_1+\delta_1}+b_2$, $c_2=h_1p^{\nu_1+\delta_1}+h_2$,
where $b_1$, $h_1$ vary over a set of representatives
of the residue classes modulo $p^{\nu_1}$ respectively,
and $b_2$, $h_2$ vary over a set of representatives
of residue classes modulo $ p^{\nu_1+\delta_1}$
prime to $ p^{\nu_1+\delta_1}$ respectively.

If $\nu_2=0$, $n_1''=p^{\lambda-1}\text{\;or\;}p^\lambda$,  we have $\widehat{p}=p\text{\ or\;}1 $. In the former case, we get
\begin{align*}
\mathfrak{C}_2^*
=&p\sideset{}{^\star}\sum_{b_2\bmod p^{\nu_1+\delta}}\;
\sideset{}{^\star}\sum_{h_2\bmod p^{\nu_1+\delta}}
\overline{\chi}\left(m-b_2p^{k-2\nu_1-\delta}\right)
\chi\left(m'-h_2p^{k-2\nu_1-\delta}\right)
\nonumber\\
   &\cdot\sideset{}{^\star}\sum_{\alpha\bmod p}
   e\left(\frac{\overline{\widehat{q'}h_2}
   -\widehat{q'}
   \overline{\widehat{q}\left(\widehat{q}+ n_2\alpha\right)
   b_2}}{p}\alpha\right)
\sum_{b_1 \bmod p^{\mu}}
\chi\left(1+\overline{m-b_2
   p^{k-2\nu_1-\delta}} p^{k-\nu_1}b_1\right)\nonumber\\
   &\cdot\sum_{h_1 \bmod p^{\nu_1}}
 \chi\left(1-\overline{m'-h_2p^{k-2\nu_1}} p^{k-\nu_1}h_1\right).
\end{align*}
Recall $\chi$ is a primitive character of modulus $p^{k}$ and $k>\lambda\geq 2\nu_1$.
Thus $\chi(1+zp^{k-\nu_1})$
is an additive character to modulus $p^{\nu_1}$, so there exists an integer
$\xi$ (uniquely determined modulo $p^{\nu_1}$), $(\xi,p)=1$, such that
$\chi(1+zp^{k-\nu_1})=\exp(2\pi i \xi z/p^{\nu_1})$. Therefore,
$\mathfrak{C}_2^*=0$. For $n_1''=p^{\lambda}$, similar conclusion is easier to prove.

Now we assume $\nu_2\geq 1$, write $\alpha=\alpha_1p^{\nu_2+\delta_2}+\alpha_2$,
where $\alpha_1$ runs over a set of representatives
of the residue classes modulo $p^{\nu_1}$,
and $\alpha_2$ runs over a set of representatives
of residue classes modulo $ p^{\nu_2+\delta_2}$
prime to $ p^{\nu_2+\delta_2}$.
Then by \eqref{openK}, we have
\begin{align*}
\mathfrak{C}_2^*
=&p^{2\nu_2+\delta_2}
\sum_{b_2\bmod p^{\nu_1+\delta_1}}
\sum_{h_2\bmod p^{\nu_1+\delta_1}}\,
\sideset{}{^\star}
\sum_{\alpha_2 \bmod p^{\nu_2+\delta_2}}
\sum_{b_1\bmod p^{\nu_1}}
\sum_{h_1\bmod p^{\nu_1}}\\
&\cdot\sum_{\alpha_1 \bmod p^{\nu_2}}
\overline{\chi}\left(m-(b_2+b_1p^{\nu_1+\delta_1})
   p^{k-2\nu_1-\delta_1}\right)
   \chi\left(m'-(h_2+h_1p^{\nu_1+\delta_1})
   p^{k-2\nu_1-\delta_1}\right)\\
&\cdot e\left(\frac{
   \overline{\widehat{q'}(h_2+h_1p^{\nu_1+\delta_1})}-\widehat{q'}
   \overline{\widehat{q}\left(\widehat{q}+ n_2\alpha_2+n_2\alpha_1p^{\nu_2+\delta_2}\right)
   (b_2+b_1p^{\nu_1+\delta_1})}}{p^{2\nu_2+\delta_2}}
   (\alpha_2+\alpha_1p^{\nu_2+\delta_2})\right).
\end{align*}
Note that $k>\lambda= 2\nu_1+\delta_1\geq 2\nu_2+\delta_2$ and
$\overline{a+bp^{\nu_1}}\equiv \overline{a}(1-\overline{a}bp^{\nu_1})\bmod p^{2\nu_1}$. Thus
\begin{align}\label{c1c2}
\mathfrak{C}_2^*
=p^{2\nu_1+3\nu_2+\delta_2} \mathop{\sum_{b_2(\text{{\rm mod }} p^{\nu_1+\delta_1})}
\sum_{h_2\bmod p^{\nu_1+\delta_1}}\,
 \sideset{}{^\star}\sum_{\alpha_2\bmod p^{\nu_2+\delta_2}}}_{
 \overline{(\widehat{q_1}+n_2\alpha_2)^2b_2}
    \widehat{q_2}^2n_2\alpha_2- \overline{
   (\widehat{q_1}+n_2\alpha_2)b_2}\widehat{q_2}^2
   +\widehat{q_1}\overline{h_2}\equiv 0\bmod p^{\nu_2} }f(b_2,h_2,\alpha_2)\mathcal {C}_1\mathcal {C}_2,
\end{align}
where
\begin{align*}
\mathcal {C}_1&=\frac{1}{p^{\nu_1}}\sum_{b_1 \bmod p^{\nu_1}}
   \chi\left(1+\overline{m_{1}-b_2
   p^{k-2\nu_1-\delta_1}}p^{k-\nu_1}b_{1}\right)
   e\left(\frac{\widehat{q_2}
   \overline{\widehat{q_1}(\widehat{q_1}+n_2\alpha_2)b_2^2}
                \alpha_2n_1''}{p^{\nu_1}}b_1\right),\\
\mathcal {C}_2&=\frac{1}{p^{\nu_1}}\sum_{h_1 \bmod p^{\nu_1}}
   \chi\left(1-\overline{m'-h_2p^{k-2\nu_1-\delta_1}}p^{k-\nu_1}h_1\right)
   e\left(\frac{-\overline{\widehat{q_2}h_2^2} \alpha_2 n_1''}
   {p^{\nu_1}}h_1\right),
\end{align*}
and
\begin{align*}
f(b_2,h_2,\alpha_2)=\overline{\chi}\left(m-b_2p^{k-2\nu_1-\delta_1}\right)
   \chi\left(m'-h_2p^{k-2\nu_1-\delta_1}\right)
  e\left(\frac{\overline{\widehat{q'}h_2}
  -\widehat{q'}\overline{\widehat{q}(\widehat{q}+n_2\alpha_2)b_2
  }}{p^{2\nu_2+\delta_2}}d_2\right).
\end{align*}
Since $\chi(1+zp^{k-\nu_1})=\exp(2\pi i \xi z/p^{\nu_1})$ with $(\xi,p)=1$, we have
\begin{align*}
\mathcal {C}_1&=\frac{1}{p^{\nu_1}}\sum_{b_1\bmod  p^{\nu_1}}
  e\left(\frac{\overline{m-b_2
   p^{k-2\nu_1-\delta_1}}\xi}{p^{\nu_1}}b_1\right)
   e\left(\frac{\widehat{q'}\overline{\widehat{q}(\widehat{q}+n_2\alpha_2)b_2^2}
                \alpha_2n_1''}{p^{\nu_1}}b_1\right)\\
&=\delta\big(\overline{m-b_2
   p^{k-2\nu_1-\delta_1}} \xi+\widehat{q'}\overline{\widehat{q}(\widehat{q}+n_2\alpha_2)b_2^2}
                \alpha_2n_1''\equiv 0 \bmod p^{\nu_1}\big).
\end{align*}
Thus $\mathcal {C}_1$ vanishes unless $n_1''=1$ which in turn implies that $\nu_1=\nu_2$ and $\delta_1=\delta_2$.
Then we get (1).

By taking $\lambda\leq 2k/3$, we have $k\geq 3\nu_1+2\delta_1$. Hence
$\mathcal {C}_1$ vanishes unless $\overline{m} \xi+\widehat{q'}
\overline{\widehat{q_1}(\widehat{q}+n_2\alpha_2)b_2^2}
                \alpha_2\equiv 0\bmod p^{\nu_1}$.
Similarly,
\begin{align*}
\mathcal {C}_2=\delta\big(\overline{m'} \xi+\overline{\widehat{q'}h_2^2} \alpha_2
   \equiv 0 \bmod p^{\nu_1}\big).
\end{align*}
Plugging these into \eqref{c1c2} we obtain
\begin{align}\label{ff}
\mathfrak{C}_2^*
&=p^{5\nu_1+\delta_1} \mathop{\sum_{b_2\bmod p^{\nu_1+\delta_1}}
\sum_{h_2(\text{{\rm mod }} p^{\nu_1+\delta_1})}\,
 \sideset{}{^\star}\sum_{\alpha_2\bmod p^{\nu_1+\delta_1}}}_{\substack{
 \overline{(\widehat{q}+n_2\alpha_2)^2b_2}
    \widehat{q'}^2n_2\alpha_2- \overline{
   (\widehat{q}+n_2\alpha_2)b_2}\widehat{q_2}^2
   +\widehat{q}\overline{h_2}\equiv 0\bmod p^{\nu_1}\\
   \overline{m} \nu_1+\widehat{q_2}\overline{\widehat{q}(\widehat{q_1}+n_2\alpha_2)b_2^2}
                \alpha_2\equiv 0 \bmod p^{\nu_1}\\
   \overline{m'} \nu_1+\overline{\widehat{q'}h_2^2} \alpha_2
   \equiv 0 \bmod p^{\nu_1}
   }}f(b_2,h_2,\alpha_2).
\end{align}
To count the numbers of $b_2,h_2$ and $\alpha_2$, we solve the three congruence
equations in \eqref{ff}.

(i) If $n_2=0$ or $n_2=p^\ell n_2'$ with
$(n_2',p)=1$ and $p^\ell\geq p^{\nu_1}$, we have
\begin{align*}
\left\{\begin{array}{l}
h_2\equiv
\overline{\widehat{q'}^2}\widehat{q}^2b_2
  \bmod p^{\nu_1},\\
   \alpha_2\equiv -\overline{m} \xi\widehat{q}^2\overline{\widehat{q'}}
                b_2^2 \bmod p^{\nu_1},\\
   \alpha_2\equiv -\overline{m'} \xi\widehat{q'}h_2^2
   \bmod p^{\nu_1}.
\end{array}\right.
\end{align*}
By the last two equations, one sees that $\mathfrak{C}_2^*$ vanishes unless
$m\widehat{q}^2\equiv m'\widehat{q'}^2\bmod p^{\nu_1}$.
By \eqref{10.4}, the bound in \eqref{10.1} follows.

Moreover, for fixed $b_2$, $h_2$ and $\alpha_2$ are uniquely determined
modulo $p^{\nu_1}$. Therefore,
\begin{align}\label{6.7}
\mathfrak{C}_2^*\ll p^{6\nu_1+4\delta_1}
\ll p^{3\lambda+\delta_1}.
\end{align}

(ii) If $n_2\neq 0$, we let $n_2=p^\ell n_2'$ with $(n_2',p)=1$ and $p^\ell< p^{\nu_1}$, and let $\gamma=\overline{\widehat{q}+n_2\alpha_2}$. Then
$\alpha_2\equiv\overline{n_2'}
(\overline{\gamma}-\widehat{q})/p^\ell \bmod  p^{\nu_1-\ell}$ and
the three equations give
\begin{align}\label{6.8}
\left\{\begin{array}{l}
b_2\equiv \widehat{q'}^2\gamma^2h_2\bmod p^{\nu_1},\\
\gamma\equiv\overline{\widehat{q}}\left(1+\overline{m}\xi\widehat{q}
\overline{\widehat{q'}}
n_2b_2^2\right)\bmod p^{\nu_1},\\
\overline{\gamma}\equiv
\widehat{q}\left(1-\overline{m'}\xi\overline{\widehat{q}}
\widehat{q'}
n_2h_2^2\right)\bmod p^{\nu_1}.
\end{array}\right.
\end{align}
Plugging the second equation into the first equation in \eqref{6.8} we get
\begin{align*}
b_2\equiv \widehat{q'}^2\overline{\widehat{q}}^2
\left(1+\overline{m}\xi\widehat{q}\overline{\widehat{q'}}
n_2b_2^2\right)^2h_2\bmod p^{\nu_1}.
\end{align*}
By the above equation and the last two equations in \eqref{6.8} we get
\begin{align}\label{6.10}
& \left(\overline{m}\xi\widehat{q}\overline{\widehat{q'}}\right)^5u^5
   +4\left(\overline{m}\xi\widehat{q}\overline{\widehat{q'}}\right)^4u^4
   +6\left(\overline{m}\xi\widehat{q}\overline{\widehat{q'}}\right)^3u^3
   +4\left(\overline{m}\xi\widehat{q}\overline{\widehat{q'}}\right)^2u^2\nonumber\\
& \qquad-\overline{mm'}\xi^2\widehat{q}^4\overline{\widehat{q_2}}^4u^2
   +\overline{m}\xi\widehat{q}\overline{\widehat{q'}}u
   -\overline{m'}\xi\widehat{q}^3\overline{\widehat{q'}}^3u
   \equiv 0 \bmod p^{\nu_1},
\end{align}
where $u=n_2b_2^2$. Thus there are at most 5 roots modulo $p^{\nu_1}$ for $u$.
Therefore, there are at most 10 roots modulo $p^{\nu_1-\ell}$ for $b_2$.
For fixed $u$,
$\gamma$ is uniquely determined modulo $p^{\nu_1}$ and for fixed $\gamma$ and $b_2$,
$h_2$ is uniquely determined modulo $p^{\nu_1}$ by the first equation in \eqref{6.8}.
Then by the last congruence equation in
\eqref{ff}, $d_2$ is uniquely determined modulo $p^{\nu_1}$.
Therefore,
\begin{align*}
\mathfrak{C}_2^*\ll p^{5\nu_1+\ell+4\delta_1}\ll p^{5\nu_1/2+\ell+3\delta_1/2}.
\end{align*}
By \eqref{6.7} and \eqref{6.10}, the bound in \eqref{10.2} follows.
\end{proof}

\bigskip
\noindent{\bf Acknowledgements.}
We are grateful to Professor Jianya Liu  for his guidance.
We gratefully acknowledge the many helpful suggestions
of Bingrong Huang, Qingfeng Sun and Zhao Xu
during the preparation of the paper.

\bigskip

  \begin{bibdiv}

\begin{biblist}

\bib{Agg2}{article} {
    author = {Aggarwal, Keshav},
     title = {A new subconvex bound for {$\rm GL(3)$} {$L$}-functions in the
              {$t$}-aspect},
   journal = {Int. J. Number Theory},
  
    volume = {17},
      year = {2021},
  number = {5},
  pages= {1111--1138},
      DOI = {10.1142/S1793042121500275},
       URL = {https://doi.org/10.1142/S1793042121500275},
}

\bib{AHLQ}{article} {
    author = {Aggarwal, Keshav},
    author={Holowinsky, Roman},
    author={Lin, Yongxiao},
    author={Qi, Zhi},
     title = {A Bessel delta-method and exponential sums for {$\rm GL(2)$}},
     journal={Q. J. Math.},
   volume={71},
   date={2020},
   number={3},
   pages={1143--1168},
   issn={0033-5606},
   doi={10.1093/qmathj/haaa026},

}

\bib{BKY}{article}{
   author={Blomer, Valentin},
   author={Khan, Rizwanur},
   author={Young, Matthew},
   title={Distribution of mass of holomorphic cusp forms},
   journal={Duke Math. J.},
   volume={162},
   date={2013},
   number={14},
   pages={2609--2644},
   issn={0012-7094},
   doi={10.1215/00127094-2380967},
}

\bib{BD}{article}{
   author={Blomer,V.},
   author={Mili\'{c}evi\'{c}, D.},
   title={$p$-adic analytic twists and strong subconvexity},
   journal={Ann. Sci. \'{E}c. Norm. Sup\'{e}r. (4)},
   volume={48},
   date={2015},
   number={3},
   pages={561-605},
}

\bib{DFI}{article}{
   author={Duke, W.},
   author={Friedlander, J.},
   author={Iwaniec, H.},
   title={Bounds for automorphic $L$-functions},
   journal={Invent. Math.},
   volume={112},
   date={1993},
   number={1},
   pages={1--8},
   issn={0020-9910},
   review={\MR{1207474}},
   doi={10.1007/BF01232422},
}

\bib{Gol}{book}{
   author={Goldfeld, Dorian},
   title={Automorphic forms and $L$-functions for the group ${\rm
   GL}(n,\bold R)$},
   series={Cambridge Studies in Advanced Mathematics},
   volume={99},
   note={With an appendix by Kevin A. Broughan},
   publisher={Cambridge University Press, Cambridge},
   date={2006},
   pages={xiv+493},
   isbn={978-0-521-83771-2},
   isbn={0-521-83771-5},
   doi={10.1017/CBO9780511542923},
}

\bib{GL1}{article}{
   author={Goldfeld, Dorian},
   author={Li, Xiaoqing},
   title={Voronoi formulas on ${\rm GL}(n)$},
   journal={Int. Math. Res. Not.},
   date={2006},
   pages={Art. ID 86295, 25},
   issn={1073-7928},
   doi={10.1155/IMRN/2006/86295},
}

\bib{GR}{book}{
   author={Gradshteyn, I. S.},
   author={Ryzhik, I. M.},
   title={Table of integrals, series, and products},
   edition={7},
   note={Translated from the Russian;
   Translation edited and with a preface by Alan Jeffrey and Daniel
   Zwillinger;
   With one CD-ROM (Windows, Macintosh and UNIX)},
   publisher={Elsevier/Academic Press, Amsterdam},
   date={2007},
   pages={xlviii+1171},
   isbn={978-0-12-373637-6},
   isbn={0-12-373637-4},
}

\bib{Huang1}{article} {
    author={Huang, Bingrong}
     title = {Hybrid subconvexity bounds for twisted {$L$}-functions on
              {GL}(3)},
    journal={Sci. China Math.},
    VOLUME = {64},
      YEAR = {2021},
    NUMBER = {3},
     PAGES = {443--478},
      ISSN = {1674-7283},
   MRCLASS = {11F67 (11F66)},
  MRNUMBER = {4215995},
       DOI = {10.1007/s11425-017-9428-6},
   date={2021}
   doi={10.1007/s00208-021-02186-7},
}

\bib{Huang}{article} {
    AUTHOR = {Huang, Bingrong},
     TITLE = {On the {R}ankin-{S}elberg problem},
   JOURNAL = {Math. Ann.},
  FJOURNAL = {Mathematische Annalen},
    VOLUME = {381},
      YEAR = {2021},
    NUMBER = {3-4},
     PAGES = {1217--1251},
      ISSN = {0025-5831},
   MRCLASS = {11F30 (11F66 11L07)},
  MRNUMBER = {4333413},
       DOI = {10.1007/s00208-021-02186-7},
       URL = {https://doi.org/10.1007/s00208-021-02186-7},
}

\bib{HX}{article} {
    author={Huang,Bingrong}
    author={Xu,Zhao}
     title = {Hybrid subconvexity bounds for twists of $\rm GL(3)\times GL(2)$ $L$-functions},
    note={\url{arXiv:2103.11361}},
   date={2021}
}

\bib{Hux2}{book}{
   author={Huxley, M. N.},
   title={Area, lattice points, and exponential sums},
   series={London Mathematical Society Monographs. New Series},
   volume={13},
   note={Oxford Science Publications},
   publisher={The Clarendon Press, Oxford University Press, New York},
   date={1996},
   pages={xii+494},
   isbn={0-19-853466-3},
}

\bib{IK}{book}{
   author={Iwaniec, Henryk},
   author={Kowalski, Emmanuel},
   title={Analytic number theory},
   series={American Mathematical Society Colloquium Publications},
   volume={53},
   publisher={American Mathematical Society, Providence, RI},
   date={2004},
   pages={xii+615},
   isbn={0-8218-3633-1},
   doi={10.1090/coll/053},
}

\bib{Li}{article}{
   author={Li, Xiaoqing},
   title={Bounds for ${\rm GL}(3)\times {\rm GL}(2)$ $L$-functions and ${\rm
   GL}(3)$ $L$-functions},
   journal={Ann. of Math. (2)},
   volume={173},
   date={2011},
   number={1},
   pages={301--336},
   issn={0003-486X},
   doi={10.4007/annals.2011.173.1.8},
}
\bib{LIN}{article}{
   author={Lin, Yongxiao},
   title={Bounds for twists of GL(3) L-functions
},
   journal={J. Eur. Math. Soc.},
  volume={23},
   date={2021},
   pages={1899--1924},
  doi={10.4171/JEMS/1046},
}
\bib{LMS}{article} {
    author = {Lin, Yongxiao}
    author={Michel, Ph.}
    author={Sawin, Will}
     title = {Algebraic twists of $\rm GL_3\times \rm GL_2$ $L$-functions},
    note={\url{arXiv:1912.09473}},
   date={2019}
}

\bib{LS}{article} {
    author = {Lin, Yongxiao}
    author={Sun, Qingfeng}
     title = {Analytic twists of $\rm GL_3\times \rm GL_2$ automorphic forms},
    journal={Int. Math. Res. Not.},
    volume={2021},
    pages={15143--15208},
   doi={10.1093/imrn/rnaa348},
}

\bib{MS2}{article}{
   author={Miller, Stephen D.},
   author={Schmid, Wilfried},
   title={Automorphic distributions, $L$-functions, and Voronoi summation
   for ${\rm GL}(3)$},
   journal={Ann. of Math. (2)},
   volume={164},
   date={2006},
   number={2},
   pages={423--488},
   issn={0003-486X},
   doi={10.4007/annals.2006.164.423},
}

\bib{Mun3}{article}{
   author={Munshi, Ritabrata},
   title= {The circle method and bounds for {$L$}-functions, {II}:
              {S}ubconvexity for twists of {${\rm GL}(3)$} {$L$}-functions},,
   journal={Amer. J. Math.},
   volume={137},
   date={2015},
   number={3},
   pages={791--812},
   issn= {0002-9327},
   doi= {10.1353/ajm.2015.0018},
}

\bib{Mun1}{article}{
   author={Munshi, Ritabrata},
   title={The circle method and bounds for $L$-functions---III: $t$-aspect
   subconvexity for $GL(3)$ $L$-functions},
   journal={J. Amer. Math. Soc.},
   volume={28},
   date={2015},
   number={4},
   pages={913--938},
   issn={0894-0347},
   doi={10.1090/jams/843},
}

\bib{Mun6}{article}{
   author={Munshi, Ritabrata},
   title={Subconvexity for $GL(3)\times GL(2)$ {$L$}-functions in $t$-aspect},
   note={\url{arXiv:1810.00539}},
   date={2018},
}

 \bib{SZ}{article}{
   author={Sun, Qingfeng},
   author={Zhao, Rui},
   title={Bounds for ${\rm GL}_3$ $L$-functions in depth aspect},
   journal={Forum Math.},
   volume={31},
   date={2019},
   number={2},
   pages={303--318},
}

\end{biblist}

\end{bibdiv}
	\end{document}